\newtheorem{thm}[equation]{Theorem}
\newtheorem{lem}[equation]{Lemma}
\newtheorem{cor}[equation]{Corollary}
\newtheorem{prop}[equation]{Proposition}
\newtheorem*{thm*}{Theorem}
\newtheorem*{prop*}{Proposition}
\newtheorem*{cor*}{Corollary}
\newtheorem*{lem*}{Lemma}
\newtheorem*{MT*}{Main Theorem}
\newtheorem*{ques*}{Question}
\theoremstyle{definition} %
\newtheorem*{defn*}{Definition}
\newtheorem{eg}[equation]{Example}
\theoremstyle{remark} %
\newtheorem{rmk}[equation]{Remark}
\newtheorem*{rmk*}{Remark}
\newtheorem*{rmks*}{Remarks}
\DeclareMathOperator{\Spin}{Spin}           
\DeclareMathOperator{\Sp}{Sp}
\DeclareMathOperator{\PSp}{PSp}
\DeclareMathOperator{\SL}{SL}
\DeclareMathOperator{\GL}{GL}
\DeclareMathOperator{\PGL}{PGL}
\DeclareMathOperator{\PSO}{PSO}
\DeclareMathOperator{\HSpin}{HSpin}
\newcommand{\SO}{\mathrm{SO}}
\DeclareMathOperator{\Lie}{Lie}
\DeclareMathOperator{\Ad}{Ad}
\newcommand{\ot}{\otimes}
\newcommand{\Z}{\mathbb{Z}}
\newcommand{\F}{\mathbb{F}}
\newcommand{\C}{\mathbb{C}}
\newcommand{\Vb}{\overline{V}}
\newcommand{\e}{\varepsilon}
\newcommand{\pgl}{\mathfrak{pgl}}
\newcommand{\gl}{\mathfrak{gl}}
\newcommand{\stbtmat}[4]{\left( \begin{smallmatrix} #1&#2 \\ #3&#4 \end{smallmatrix} \right) }
\newcommand{\qform}[1]{{\left\langle{#1}\right\rangle}}                   
\DeclareMathOperator{\car}{char}
\DeclareMathOperator{\rk}{rank}
\newcommand{\Gm}{\mathbb{G}_m}
\newcommand{\Ga}{\mathbb{G}_a}
\renewcommand{\P}{\mathbb{P}}
\renewcommand{\sc}{{\mathrm{sc}}}
\newcommand{\adj}{{\mathrm{adj}}}
\DeclareMathOperator{\trdeg}{trdeg}
\DeclareMathOperator{\res}{res}
\DeclareMathOperator{\AGL}{AGL}
\newcommand{\Aff}{\mathbb{A}}
\DeclareMathOperator{\diag}{diag}
\DeclareMathOperator{\ed}{ed}
\numberwithin{equation}{section}
\begin{document}

\title[Essential dimension of algebraic groups]{Essential dimension of algebraic groups, including bad characteristic}
\author[S. Garibaldi]{Skip Garibaldi}
\address{Garibaldi: Center for Communications Research, San Diego, California 92121}
\email{skip@member.ams.org}

\author[R.M. Guralnick]{Robert M. Guralnick}
\address{Guralnick: Department of Mathematics, University of Southern California,
Los Angeles, CA 90089-2532}
\email{guralnic@usc.edu}

\begin{abstract}
We give upper bounds on the essential dimension of (quasi-)simple algebraic groups over an algebraically closed field that hold in all characteristics.  The results depend on showing that certain representations are generically free.  In particular, aside from the cases of spin and half-spin groups, we prove that the essential dimension of a simple algebraic group $G$ of rank at least two is at most $\dim G - 2(\rk G) - 1$.  It is known that the essential dimension of spin and half-spin groups grows exponentially in the rank.  In most cases, our bounds are as good or better than those known in characteristic zero and the proofs are shorter.  We also compute the generic stabilizer of an adjoint group on its Lie algebra.
\end{abstract}

\thanks{Guralnick was partially supported by NSF grants DMS-1265297 and DMS-1302886.}
\subjclass[2010]{Primary 11E72; Secondary 20G41, 17B45}

\maketitle

\setcounter{tocdepth}{1}
\tableofcontents

\section{Introduction}

The essential dimension of an algebraic group $G$ is the minimal transcendence degree of the field of definition of a versal $G$-torsor.  (Although inaccurate, one can think of it as the number of parameters needed to specify a $G$-torsor.)  This invariant captures deep information about algebraic structures with automorphism group $G$, and it is difficult to calculate.  For example, the fact that $\ed(\PGL_2) = \ed(\PGL_3) = 2$ corresponds to the classical fact that division algebras of dimension $2^2$ or $3^2$ over their center are cyclic, and it is an open problem whether the essential dimension of $\PGL_p$ is 2 for primes $p \ge 5$ \cite[Problem 6.2]{ketura}, although it is known that $\ed(\PGL_n)$ is not $O(n)$ \cite{M:lower}.  Therefore, the bulk of known results on essential dimension provide upper or lower bounds, as in \cite{ChSe}, 
 \cite{GiRei},
 \cite{BaekM}, \cite{MR3039772},  etc.  (See \cite{M:bbk}, \cite{M:ed}, or \cite{Rei:ICM} for a survey of the current state of the art.)  
In this paper, we provide upper bounds on $\ed(G)$ for every simple algebraic group $G$ over an algebraically closed field $k$, regardless of the characteristic of $k$.  Our bounds are in some cases as good as (Theorem \ref{A}) or better (Theorems \ref{big.O} and \ref{PSp}) than the bounds known in characteristic zero, and have shorter proofs.  One summary consequence of our results is the following.

\begin{thm}  \label{big.O}
Let $G$ be a simple algebraic group over an algebraically closed field. 
Then 
\[
\ed(G) \le  \dim G - 2 (\rk G) - 1
\]
or $G \cong \PGL_2$, or $G \cong \Spin_n$ or $\HSpin_n$ for some $n$.
\end{thm}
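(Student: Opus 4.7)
My plan is to use the standard bound $\ed(G) \le \dim V - \dim G$, valid whenever $V$ is a generically free $G$-representation: for each simple $G$ in the claimed range I will construct such a $V$ of dimension at most $2\dim G - 2(\rk G) - 1$. The three exclusions are forced: $\ed(\Spin_n)$ and $\ed(\HSpin_n)$ grow exponentially in $n$, so no polynomial-dimension representation can yield such a bound, and $\ed(\PGL_2) = 2$ strictly exceeds the candidate value $3 - 2 - 1 = 0$.

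The rank-one case dispatches quickly: the only split simple groups of rank $1$ are $\SL_2$ and $\PGL_2$, and $\SL_2$ is special so $\ed(\SL_2) = 0$ meets the bound. For $\rk G \ge 2$, I would organize the argument by isogeny class, since the available small faithful representations differ significantly between simply connected, adjoint, and intermediate isogenies. For simply connected classical groups other than spin, a small number of copies of the standard representation are generically free with the correct dimension estimate. For an adjoint $G$ the natural candidate is $V = \g \oplus W$: the first summand reduces the generic stabilizer to a maximal torus $T$ (when this holds), and the auxiliary $W$ need only be a $G$-representation whose restriction to $T$ has weight set generating the character lattice, so that $T$ acts generically freely on $W$. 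For the exceptional groups one uses the adjoint or minuscule representations; intermediate isogenies require ad hoc choices tuned to the torsion in the center.

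The principal obstacle I foresee is verifying generic freeness in all characteristics, and specifically that the generic stabilizer of an adjoint $G$ acting on $\g$ really is a maximal torus. In characteristic zero this is classical, but in bad characteristic (small primes depending on the type) the Lie algebra behaves unexpectedly, the nilpotent cone grows, and the generic stabilizer can jump, so each such case must be handled by direct Lie-theoretic computation or by switching to a different representation altogether. A secondary, more bookkeeping obstacle is hitting the precise numerical bound: naive choices of $W$ typically yield only $\ed(G) \le \dim G$ or $\le \dim G - \rk G$, and squeezing down to $|\Phi| - \rk G - 1 = \dim G - 2(\rk G) - 1$ requires careful matching of $\dim W$ to the low-dimensional representation theory available for the group at hand. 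The balance between these two constraints, type by type and prime by prime, is where the substantive work will live.
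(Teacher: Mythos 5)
There is a genuine gap, and it lies in the central tool you propose. The bound $\ed(G) \le \dim V - \dim G$ for a generically free $G$-representation $V$ requires $\dim V \le 2\dim G - 2(\rk G) - 1$, and for several of the groups in question no such $V$ exists. The starkest case is $E_8$: its smallest nontrivial representation is the adjoint one, of dimension $248 = \dim G$, and a single copy is not generically free (the generic stabilizer contains a maximal torus), so any generically free $V$ has $\dim V \ge 2\dim G$ and your method yields at best $\ed(E_8) \le 248$, not $231$. The same obstruction kills the $\g \oplus W$ strategy for adjoint groups generally: the auxiliary $W$ must have weights spanning the root lattice $T^*$, so for $\PGL_n$ (whose smallest such representation is again the adjoint one) you get $\dim W \ge n^2 - 1 > n^2 - 2n$, overshooting the target. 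Your secondary worry is also well-founded but compounds the problem: in characteristic $2$ the generic stabilizer of an adjoint $G$ on $\g$ is a maximal torus extended by a nontrivial finite group (e.g.\ $(\Z/2)^n$ in type $B_n$), as shown in Proposition \ref{adjoint}, so $\g \oplus W$ need not be generically free even when the dimensions work out.

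The missing ideas are two reductions that let one escape from $G$-representations entirely. First, $\ed(G) \le \ed(N_G(T))$ for $T$ a maximal torus, because $H^1(K, N_G(T)) \to H^1(K, G)$ is surjective; the group $N_G(T)$ has dimension only $\rk G$, so much smaller representations suffice. Second, Lemma \ref{ed.proj}: a $G$-equivariant compression from a representation to a projective space (or product of projective spaces) on which the group acts generically freely subtracts the extra $1$ from the bound. The paper combines these by letting $N_G(T)$ act on $\P(\Vb)$, where $\Vb$ is the span of the short-root weight spaces in the Weyl module with highest weight the highest short root; generic freeness (Lemma \ref{proj}, a combinatorial condition on the weights and the Weyl group, uniform in the characteristic) gives $\ed(G) \le |\Omega| - \rk G - 1 \le \dim G - 2(\rk G) - 1$. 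Type $A$ with intermediate center is then handled by transfer lemmas reducing $\SL_n/\mu_m$ to $\PGL_n$, and a few cases ($\SO_m$, $\Sp_{2n}$, $G_2$, $E_6^\sc$, $E_7^\sc$) are dispatched by known values or by Theorem \ref{ed.thm}. Without the passage to $N_G(T)$ and to projective space, the type-by-type program you outline cannot reach the stated numerical bound.
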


For the excluded cases, $\ed(\PGL_2) = 2$ (Example \ref{A1.ed}).  For spin and half-spin groups, essential dimension grows exponentially in $n$ \cite{BRV} whereas the dimension, $\binom{n}{2}$, is quadratic in $n$.   Specifically,
$\ed(\Spin_n) > \dim \Spin_n$ for all $n \ge 19$ and $\ed(\HSpin_n) > \dim \HSpin_n$ for $n$ divisible by 4 and $\ge 20$.  

\subsection*{Adjoint groups}
Under the additional hypotheses that $G$ is adjoint and $\car k = 0$,  it is well known that an adjoint semisimple group $G$ acts generically freely on $\Lie(G) \oplus \Lie(G)$\footnote{By, e.g., \cite[Lemma 3.3(b)]{Richardson:n}.  For analogous statements in prime characteristic, see \S\ref{adj.sec}.} and consequently $\ed(G) \le \dim G$, as was pointed out in \cite[Remark 3-11]{BRV}.  In this setting, the stronger bound in Theorem \ref{big.O} was proved in \cite{Lemire}.  Dropping the hypothesis on $\car k$ but still assuming $G$ is adjoint, the bound $\ed(G) \le \dim G - 2(\rk G)$ was recently proved in \cite[Cor.~10]{BGS}.

Theorem \ref{big.O} for adjoint groups includes the following bounds, where we write $T^\adj_n$ for an adjoint group of type $T_n$:
\begin{gather} \label{BGS.1}
\ed(E^\adj_6) \le 65, \quad \ed(E^\adj_7) \le 118, \quad \ed(E_8) \le 231, \\
\text{and} \quad \ed(D_n^\adj) \le \text{$2n^2 - 3n - 1$ for $n \ge 4$.} \notag
\end{gather}
(The adjoint group $D_n^\adj$ is sometimes denoted $\PSO_{2n}$.)  These bounds agree with those in \cite{Lemire} for characteristic 0.  (The number 112 given there for $E_7^\adj$ was a typo.)  

We remark that the essential dimension of $\SO_{2n+1}$ (adjoint of type $B_n$) is $2n$ if $\car k \ne 2$ \cite{Rei:ed} and is $n+1$ if $\car k =2$ \cite{BabicCh}.  For $\SO_{2n}$ (of type $D_n$), the essential dimension is $2n-1$ if $\car k \ne 2$ and is $n$ or $n+1$ if $\car k = 2$. 

\subsection*{Groups of type $C$}
We give the following upper bound for adjoint groups of type $C_n$, which improves on the bound $2n^2 - 3n -1$ given in \cite{Lemire} in characteristic zero.

\begin{thm} \label{PSp}
Over an algebraically closed field $k$ and for $n \ge 4$:
\[
\ed(\GL_{2n}/\mu_2) \le \ed(\PSp_{2n}) \le \begin{cases}
2n^2 - 3n - 4 & \text{if $\car k \!\not\vert\,  n$ or $n=4$} \\
2n^2 - 3n - 6 & \text{if $\car k \mid n$ and $n > 4$.}
\end{cases}
\]
\end{thm}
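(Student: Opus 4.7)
The plan is to handle the two inequalities in order. The outer inequality $\ed(\GL_{2n}/\mu_2) \le \ed(\PSp_{2n})$ is a formal consequence of a classical result of Albert: every central simple algebra of degree $2n$ with Brauer class of period dividing $2$ carries a symplectic involution. Over any field $K$, this gives a surjection of functors $H^1(-, \PSp_{2n}) \twoheadrightarrow H^1(-, \GL_{2n}/\mu_2)$, and the inequality then follows from the standard surjection principle for essential dimension.

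For the upper bound on $\ed(\PSp_{2n})$ itself, I would follow the paper's guiding strategy: produce a linear representation $V$ of $\PSp_{2n}$ on which the action is generically free, so that $\ed(\PSp_{2n}) \le \dim V - \dim \PSp_{2n} = \dim V - (2n^2 + n)$. The target is $\dim V = 4n^2 - 2n - 4$ in the good case, and $\dim V = 4n^2 - 2n - 6$ when $\car k \mid n$ and $n > 4$. The natural building block is the fundamental representation $V(\omega_2)$, realized as the nontrivial composition factor of $\wedge^2 V_{2n}$ where $V_{2n}$ is the defining representation. Its dimension is $2n^2 - n - 1$ when $\car k \nmid n$, but drops to $2n^2 - n - 2$ when $\car k \mid n$, because the invariant element $\omega = \sum_i e_i \wedge f_i$ satisfies $\langle \omega, \omega\rangle = n$ and so falls into the radical of its own contraction pairing in bad characteristic. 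I would therefore try $V = V(\omega_2) \oplus V(\omega_2)$, further trimmed when necessary by passing to a subquotient on which the action stays generically free (removing any $\PSp_{2n}$-invariant lines or functionals, a reduction which preserves generic freeness); the characteristic split in the statement mirrors exactly the dimension split of $V(\omega_2)$.

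The main obstacle is to verify generic freeness of the chosen $V$, which is the technical heart of the theorem. I would handle it using the paper's methods: bound the Lie-algebra-level stabilizer of a generic vector by weight and rank arguments on $V$ viewed as an $\Sp_{2n}$-representation, and then rule out nontrivial finite stabilizers beyond the kernel $\mu_2 \subset \Sp_{2n}$ (which accounts for passing from $\Sp_{2n}$ to $\PSp_{2n}$). The improvement to $-6$ when $\car k \mid n$ and $n > 4$ would come from an even smaller subquotient or a refined representation available only in that situation, made possible by the indecomposable structure of $\wedge^2 V_{2n}$ when $\omega$ is self-null; working out exactly how such a trim is constructed (and why the case $n=4$ is genuinely exceptional, presumably via a small-rank coincidence for $\PSp_8$) is where I would expect to invest the most effort.
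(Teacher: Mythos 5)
Your treatment of the first inequality is fine and coincides with the paper's parenthetical argument: the forgetful map from (algebra, symplectic involution) pairs to exponent-$2$ algebras of degree $2n$ is surjective by Albert's theorem, so $H^1(K,\PSp_{2n})\to H^1(K,\GL_{2n}/\mu_2)$ is surjective. The gap is in the second inequality, and it is first of all quantitative. With $V=L(\omega_2)\oplus L(\omega_2)$ and the bound $\ed(G)\le\dim V-\dim G$, you get $2(2n^2-n-1)-(2n^2+n)=2n^2-3n-2$ when $\car k\nmid n$ (and $2n^2-3n-4$ when $\car k\mid n$), missing the stated bounds by $2$ in each case. Your proposed fix --- trimming to a smaller subquotient by removing invariant lines or functionals --- cannot supply the missing $2$, because $L(\omega_2)$ is already irreducible: there is nothing left to remove. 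The paper recovers the $2$ by replacing the linear representation with the projective variety $\P(W)\times\P(W)$, where $W=L(\omega_2)$: since $W\oplus W\dashrightarrow\P(W)\times\P(W)$ is a $G$-equivariant dominant rational map, Lemma \ref{ed.proj} gives $\ed(\PSp_{2n})\le 2\dim\P(W)-\dim G$, which is exactly the claimed bound in both characteristic cases. One must then prove generic freeness on $\P(W)\times\P(W)$, i.e., also kill the extra scalars; this is the role of Lemma \ref{lem:polys} on the $\AGL_1$-action on monic polynomials, and that lemma is precisely where the exceptional case $(n,p)=(4,2)$ arises (every trace-zero quartic in characteristic $2$ is fixed by a translation), forcing a retreat to $Y_0\oplus Y_0$ and the weaker $-4$ bound there --- not a small-rank coincidence for $\PSp_8$.

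The second, more serious gap is in the generic-freeness verification itself. A weight-and-rank argument on the Lie algebra, followed by ruling out finite stabilizers, is not adequate here: the generic stabilizer of $\Sp_{2n}$ on a \emph{single} copy of $W$ is the large positive-dimensional subgroup $\Sp_2^{\times n}=\SL_2^{\times n}$ (a generic skew-adjoint operator splits the natural module into $n$ orthogonal nonsingular planes), so the real content of the proposition is that two \emph{generic conjugates} of $\SL_2^{\times n}$ intersect trivially as group schemes. The paper imports this from \cite{GoGu}: the generic stabilizer of $\GL_{2n}$ on $\wedge^2(k^{2n})$ is a conjugate of $\Sp_{2n}$, and the generic stabilizer of $\GL_{2n}$ on \emph{three} copies of $\wedge^2(k^{2n})$ is central; translating back, $\Sp_{2n}$ acting on two copies has central generic stabilizer, hence $\PSp_{2n}$ acts generically freely on $\P(W)\times\P(W)$. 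Without this input (or an equivalent explicit computation of the intersection of two generic conjugates of $\SL_2^{\times n}$, valid at the scheme level), the heart of the theorem is missing.
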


The interesting case of Theorem \ref{PSp} is when $n$ is even; in the special case where $n$ is odd, the natural map $\PGL_2 \times \SO_n \hookrightarrow \PSp_{2n}$ gives a surjection $H^1(k, \PGL_2) \times H^1(k, \SO_n) \to H^1(k, \PSp_{2n})$, and $\ed(\PSp_{2n}) = n + 1$ for $n$ odd, cf.~\cite[p.~302]{MacD:CIodd}.

It is known that $\ed(\GL_8/\mu_2) = 8$ if $\car k \ne 2$ \cite[Cor.~1.4]{BaekM} and is $\le 10$ if $\car k = 2$ \cite[Cor.~1.4]{Baek:edp}, somewhat better than the bound $\le 16$ provided by Theorem \ref{PSp}. 

\subsection*{Groups of type $A$}
The essential $p$-dimension of $\GL_n / \mu_m$ and of $\SL_n / \mu_m$ have been studied in \cite{BaekM}, \cite{Baek:edp}, and \cite{ChM:edpA}.  Here and in \eqref{GLSL.bound} we give upper bounds for the essential dimension (without the $p$).

\begin{thm} \label{A}
Over an algebraically closed field and for $m$ dividing $n \ge 4$, we have:
\[
\ed(\PGL_n) \le n^2 - 3n + 1 \quad \text{and} \quad 
\ed(\SL_n / \mu_m) \le n^2 - 3n + n/m + 1.
\]
\end{thm}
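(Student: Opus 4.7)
The strategy, consistent with the overall approach of this paper, is to bound $\ed(G)$ by exhibiting a generically free $G$-representation $V$ and invoking the standard inequality $\ed(G) \leq \dim V - \dim G$. The target dimensions are $\dim V = 2n^2 - 3n$ for $G = \PGL_n$ and $\dim V = 2n^2 - 3n + n/m$ for $G = \SL_n/\mu_m$.

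A natural candidate is $V = \pgl_n \oplus W$ (respectively $V = \sl_n \oplus W'$), where the first summand is the adjoint representation of dimension $n^2 - 1$. The virtue of this decomposition is that the generic $G$-stabilizer on the adjoint representation is a maximal torus $T \subset G$ of dimension $n-1$, namely the centralizer of a regular semisimple element. By a standard slice argument, generic freeness of $G$ on $\pgl_n \oplus W$ follows from generic freeness of $T$ on $W$, which reduces to the combinatorial condition that the set of $T$-weights appearing in $W$ generates the character lattice $X^*(T)$.

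The required dimensions are $\dim W = n^2 - 3n + 1$ and $\dim W' = n^2 - 3n + n/m + 1$. One would produce $W$ and $W'$ as subquotients of tensor representations $U^{\otimes a} \otimes (U^*)^{\otimes b}$, where $U$ is the standard $n$-dimensional representation of $\SL_n$, subject to $a \equiv b \pmod m$ so that $\mu_m$ acts trivially and the representation descends to $\SL_n/\mu_m$. The extra dimensions appearing in the $\SL_n/\mu_m$-bound permit inclusion of weights that detect the central subgroup $\mu_{n/m} = \ker(\SL_n/\mu_m \twoheadrightarrow \PGL_n)$, which is necessary for the larger group to act generically freely.

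The principal obstacle is verifying generic freeness in all characteristics. In bad characteristic $\car k \mid n$, $\sl_n$ contains the scalars as a submodule and is not semisimple, and the usual Schur-functor decompositions of tensor powers can fail; the decompositions $\gl_n = k \cdot I \oplus \sl_n$ and $\sl_n/k \cdot I = \pgl_n$ cease to split. One handles these cases by verifying the weight-generation condition directly (since weights can be read off a characteristic-independent integral model), and by appealing to the slice argument at the Lie algebra level rather than to representation-theoretic decompositions.
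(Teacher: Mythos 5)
There is a genuine gap: the module $W$ your argument requires does not exist. For the slice argument you need a $\PGL_n$-module $W$ of dimension $n^2-3n+1$ on which a maximal torus $T$ acts generically freely, which by Example \ref{torus} forces the weights of $W$ to generate the character lattice $T^*$, i.e., the root lattice of $A_{n-1}$. But the weights of any $\PGL_n$-module form a union of Weyl orbits of elements of that root lattice, and these orbits are either too large to fit in dimension $n^2-3n+1$ (the orbit of the highest root already has size $n^2-n$) or, when small, consist of weights divisible by $n$ or by $p$ (e.g.\ Frobenius twists), which then fail to generate $T^*$. Concretely, for $n=4$ you would need a $5$-dimensional $\PGL_4$-module whose weights generate the rank-$3$ lattice $Q(A_3)$; the only Weyl orbits in $Q(A_3)$ of size at most $5$ are orbits of multiples of $4\omega_1$ or $4\omega_3$, whose elements generate a proper sublattice, so no such module exists. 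The phrase ``subquotients of tensor representations'' does not rescue this: subquotients must be $G$-submodules, and their dimensions are equally constrained. This is precisely the obstruction the paper flags after Lemma \ref{BRV.lem}: faithful representations of $\PGL_n$ and $\SL_n/\mu_m$ are too large for purely linear-representation bounds to reach $n^2-3n+1$.

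The paper's proof avoids this with ideas absent from your sketch. First, it replaces $G$ by $N_G(T)$, using surjectivity of $H^1(K,N_G(T))\to H^1(K,G)$ to get $\ed(\PGL_n)\le\ed(N_G(T))$; the group $N_G(T)$, being an extension of $S_n$ by a torus, admits small generically free actions. Second, it does not bound $\ed(N_G(T))$ by a linear representation alone but compresses the module $\Vb$ of off-diagonal matrices, $\Vb=\oplus_i W_i$, onto the product of projective spaces $\prod_i\P(W_i)$ (Lemmas \ref{projs} and \ref{ed.proj}), saving $n$ further dimensions and yielding $n(n-2)-(n-1)=n^2-3n+1$. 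Third, the bound for $\SL_n/\mu_m$ is not obtained from any representation of $\SL_n/\mu_m$: it is deduced from the $\PGL_n$ bound via the transfer Lemma \ref{SL.xfer} (giving $\ed(\GL_n/\mu_m)\le\ed(\PGL_n)+n/m-1$) and the exact-sequence Lemma \ref{GLSL} (giving $\ed(\SL_n/\mu_m)\le\ed(\GL_n/\mu_m)+1$). Your proposed direct construction of $W'$ for $\SL_n/\mu_m$ would run into the same non-existence problem.
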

 
 If $m = 1$, then $\ed(\SL_n) = 0$.  If $m = n$, then $\SL_n / \mu_m = \PGL_n$.  Our bound for $\PGL_n$ agrees with the one given by \cite{Lemire} in $\car k = 0$; we remove this hypothesis.  A better bound on $\ed(\PGL_n)$ is known for $n$ odd \cite{LRRS}.  See \cite[\S10]{M:ed} or \cite[\S7.6]{Rei:ICM} for discussions of the many more results on upper bounds for $\PGL_n$.  If $m = 2$, then (applying Lemma \ref{GLSL}) the bound in Theorem \ref{PSp} is better by about a factor of 2.

\subsection*{Exceptional groups}
Concerning exceptional groups, a series of papers \cite{Lemire}, \cite{MacD:F4}, \cite{MacD:E7}, \cite{LoetscherMacD} have led to the following upper bounds for exceptional groups:
\[
\ed(F_4) \le 7, \quad \ed(E_6^\sc) \le 8, \quad \text{and} \quad \ed(E_7^\sc) \le 11 \quad \text{ if $\car k \ne 2, 3$.}
\]
(Here $F_4$, $E_6^\sc$ and $E_7^\sc$ denote simple and simply connected groups of types $F_4$, $E_6$, and $E_7$; the displayed upper bounds are meant to be  
compared with the dimensions of 52, 78, and 133 respectively.  These upper bounds are close to the known lower bounds of 5, 4, and 8 for $\car k \ne 2, 3$.)
The proofs of these upper bounds for $F_4$ and $E_7^\sc$ are technical and detailed calculations.  The following weaker bounds have the advantage of simple proofs and holding for fields of characteristic 2 and 3.

\begin{thm} \label{ed.thm}
Over an algebraically closed field, we have:
\[
\ed(F_4) \le 19,
\qquad
\ed(E_6^\sc) \le 20, \qquad \text{and}
\qquad
\ed(E_7^\sc) \le 49.
\]
\end{thm}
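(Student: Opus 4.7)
The plan is to apply the classical upper bound $\ed(G) \le \dim V - \dim G$, valid whenever $V$ is a generically free linear $G$-representation (or, more generally, a generically free $G$-variety birational to one). For each of the three groups, the task thus reduces to producing a generically free $G$-variety of dimension $71$, $98$, and $182$ respectively.

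The natural building blocks are the minimal faithful representations $V_{26}$, $V_{27}$, and $V_{56}$, used either on their own or together with additional summands such as the adjoint representation, another copy, or the nilpotent cone in $\Lie(G)$, and the generic-stabilizer computations of the earlier sections of the paper are the key input. Classically, $F_4$ acts on $V_{26}$ with generic stabilizer of type $D_4$, $E_6^\sc$ acts on $V_{27}$ with generic stabilizer of type $F_4$, and $E_7^\sc$ acts on $V_{56}$ with generic stabilizer of type $E_6$, and one needs the analogues of these statements in arbitrary characteristic.

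A clean shortcut is provided by the stabilizer-reduction inequality: if a smooth closed subgroup $H \subseteq G$ is the generic stabilizer of $G$ acting on $V$, then
\[
\ed(G) \le \ed(H) + \dim V - \dim G + \dim H.
\]
Applied to $E_6^\sc$ on $V_{27}$ with $H$ of type $F_4$ (dimension $52$), this yields $\ed(E_6^\sc) \le \ed(F_4) + 1 \le 20$, obtaining the middle bound directly from the first. For $F_4$, an analogous reduction using the $D_4$-type stabilizer combined with a uniform bound of the form $\ed(\Spin_8) \le 17$ would give $\ed(F_4) \le 19$. For $E_7^\sc$ the reduction to a subgroup of type $E_6$ does not obviously suffice (since the corresponding bounds on $\ed$ of $E_6^\adj$ are too weak), and one must instead construct a generically free $G$-variety directly. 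A natural candidate is the product $\mathcal{N} \times V_{56}$ of the nilpotent cone in $\Lie(E_7)$ (of dimension $\dim G - \rk G = 126$) with $V_{56}$, giving a $G$-variety of dimension $182 = 133 + 49$, whose generic stabilizer is forced to be trivial by the dimension count $7 + 78 - 133 < 0$ for the intersection of a centralizer of a regular nilpotent with a generic conjugate of the $V_{56}$-stabilizer.

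The principal obstacle is the scheme-theoretic verification of generic freeness, and of smoothness of the generic stabilizer, in small characteristics (especially $2$ and $3$, and possibly $5$), where stabilizer schemes can fail to be reduced and characteristic-zero arguments do not transfer directly. The paper's characteristic-free analysis of generic stabilizers, developed in earlier sections, is essential for handling all characteristics uniformly; once generic freeness is established as a statement about group schemes, the dimension bound is immediate.
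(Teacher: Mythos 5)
Your proposal correctly identifies the target dimensions and the general mechanism ($\ed(G)\le\dim X-\dim G$ for a suitable generically free $X$), and your $E_6^{\mathrm{sc}}$ step is essentially the paper's: the reduction to a subgroup of type $F_4$ at a cost of $+1$ (the paper phrases it as surjectivity of $H^1(K,F_4\times\mu_3)\to H^1(K,E_6^{\mathrm{sc}})$ coming from the open orbit on $\P(V_{27})$). But the other two cases have genuine gaps. For $F_4$, your chain requires (a) that the generic stabilizer of $F_4$ on $V_{26}$ is a smooth group of type $D_4$ in characteristics $2$ and $3$ (in characteristic $3$ the Weyl module $V(\omega_4)$ is not even irreducible, and scheme-theoretic smoothness of the stabilizer is exactly the kind of statement that fails in bad characteristic), and (b) a bound on $\ed(\Spin_8)$ valid in characteristic $2$; neither is supplied, and the unverified stabilizer computation is precisely the hard content you would need to prove. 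For $E_7^{\mathrm{sc}}$, the variety $\mathcal{N}\times V_{56}$ cannot be used at all: a generically free action on a $G$-variety only bounds $\ed(G)$ if the variety is versal (e.g., a $G$-compression of a linear representation, as in Lemma \ref{ed.proj}), and the nilpotent cone is not — for an anisotropic twisted form the only rational nilpotent element is $0$, so the twisted variety has no points in the free locus. Moreover your dimension count $7+78-133<0$ is only a heuristic about expected dimension; it does not rule out finite or infinitesimal stabilizers for the group scheme.

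The idea your proposal is missing is the paper's descent to the normalizer of a maximal torus. Since $H^1(K,N_G(T))\to H^1(K,G)$ is surjective, one has $\ed(G)\le\ed(N_G(T))$, and it then suffices to find a representation on which the much smaller group $N_G(T)$ acts generically freely. For $N_G(T)$ (torus extended by the Weyl group) generic freeness is a purely combinatorial condition on the weights and the Weyl action (Lemma \ref{proj}), which holds uniformly in all characteristics. Concretely, the paper takes for $F_4$ the nonzero-weight part of the short-root representation, getting $\ed(F_4)\le 24-4-1=19$ via Proposition \ref{short}, and for $E_7^{\mathrm{sc}}$ the $56$-dimensional minuscule representation, getting $\ed(E_7^{\mathrm{sc}})\le 56-7=49$ via Proposition \ref{minu}. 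This completely sidesteps the generic-stabilizer computations for $G$ itself that your outline defers to "earlier sections" but that are not actually available in the form you need.
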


The proofs of most of the theorems above rely on computations of the (scheme-theoretic) stabilizer of a generic element in a representation of $N_G(T)$ for $T$ a maximal torus in $G$.   The proof of Theorem \ref{PSp} uses  the computation of a generic stabilizer in the
 action of $\Sp$ on $L(\lambda_2)$.  Using the same technique, we prove calculate the generic stabilizer of an adjoint group $G$ acting on its Lie algebra. In particular, this stabilizer is connected unless $\car k = 2$.  In the final section, we give upper bounds on essential dimension for small spin and half-spin groups, completing the list of upper bounds on $\ed(G)$ for $G$ simple and connected over an algebraically closed field.
 
\section{Generically free actions}
Let $G$ be an affine group scheme of finite type over a field $k$, which we assume is algebraically closed.  (If $G$ is additionally smooth, then we say that $G$ is an \emph{algebraic group}.)  We put $G^\circ$ for the identity component of $G$.  If $G$ acts on a variety $X$, the stabilizer $G_x$ of an element $x \in X(k)$ is a sub-group-scheme of $G$ with points
\[
G_x(R) = \{ g \in G(R) \mid gx = x \}
\]
for every $k$-algebra $R$.  Statements ``for generic $x$'' means that there is a dense open subset $U$ of $X$ such that the property holds for all $x \in U$. 

Suppose $G$ acts on a variety $X$ in the sense that there is a map of $k$-schemes $G \times X \to X$ satisfying the axioms of a group action.  We say that $G$ acts \emph{generically freely} on $X$  if there is a nonempty open subset $U$ of $X$ such that for every $u \in U$ the stabilizer $G_u$ is the trivial group scheme 1.  It is equivalent to require that $G_u(k) = 1$ and $\Lie(G_u) = 0$.  Indeed, if $\Lie(G_u) = 0$, then $G_u$ is finite \'etale and (since $k$ is algebraically closed) it follows that $G_u = 1$.  

\begin{eg} \label{torus}
For $T$ a diagonalizable group scheme (e.g., a split torus) 
acting linearly on a vector space $V$, the stabilizer $T_v$ of a generic vector $v \in V$ is $\cap_{\omega \in \Omega} \ker \omega$ where $\Omega \subset T^*$ is the set of weights of $V$, i.e., $T_v$ is the kernel of the action.  (By the duality between diagonalizable group schemes and finitely generated abelian groups, this is a statement on the level of group schemes.) In particular, $T$ acts generically freely on $V$ if and only if $\Omega$ spans $T^*$.

Similarly, the stabilizer $T_{[v]}$ of a generic element $[v] \in \P(V)$ is $\cap_{\omega, \omega'} \ker(\omega- \omega')$, so $T$ acts generically freely on $\P(V)$ iff the set of differences $\omega - \omega'$ span $T^*$.
\end{eg}

For other groups $G$, we have the following well known lemma, see for example \cite[Lemma 2.2]{GG:simple}.

\begin{lem} \label{point}
Suppose $G$ is connected and $X$ is irreducible.  If there is a field $K \supseteq k$ and an element $x_0 \in X(K)$ such that $G_{x_0}$ is finite \'etale, then there is an $n \ge 1$ and a nonempty open $U \subseteq X$ such that, for every algebraically closed field $E \supseteq k$ and every $u \in U(E)$, $G_u$ is finite \'etale and $|G_u(E)| = n$. $\hfill\qed$
\end{lem}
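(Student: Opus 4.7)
The plan is to study the stabilizer scheme $\mathcal{S} \subseteq G \times X$ cut out by the equation $gx = x$; the projection $\pi \colon \mathcal{S} \to X$ has fiber $G_x$ at $x$, and is affine, separated, and of finite type. Let $\xi \in X$ denote the image of $x_0 \colon \operatorname{Spec} K \to X$. Since being finite étale descends along the faithfully flat map $\kappa(\xi) \hookrightarrow K$, the fiber $G_\xi$ is finite étale over $\kappa(\xi)$ of some rank $n \ge 1$; in particular $\dim G_\xi = 0$ and $\Lie G_\xi = 0$.

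I would then apply two standard upper semicontinuity principles. First, upper semicontinuity of fiber dimension for $\pi$ exhibits an open $U_1 \subseteq X$ containing $\xi$ on which $\dim G_x = 0$. Second, the coherent sheaf $e^* \Omega^1_{\mathcal{S}/X}$ on $X$, where $e \colon X \to \mathcal{S}$ is the identity section $x \mapsto (1, x)$, has fiber $(\Lie G_x)^\vee$ at $x$, so upper semicontinuity of fiber rank for coherent sheaves exhibits an open $U_2 \subseteq U_1$ containing $\xi$ on which $\Lie G_x = 0$. Over $U_2$, every stabilizer $G_u$ is a finite group scheme over $\kappa(u)$ with trivial Lie algebra, hence is finite étale; so for any algebraically closed $E \supseteq k$ and any $u \in U_2(E)$, $G_u$ is finite étale.

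For the constant cardinality, I would apply generic flatness to $\pi$ to find a nonempty open $U' \subseteq U_2$ over which $\pi$ is flat. On $U'$, $\pi$ is flat, quasi-finite, and separated with finite étale fibers; by Zariski's main theorem it factors as an open immersion $\mathcal{S}|_{U'} \hookrightarrow \mathcal{Z}$ followed by a finite morphism $\mathcal{Z} \to U'$. Because every stabilizer contains the identity, $\mathcal{S}|_{U'} \to U'$ is surjective and hits the entire generic fiber of $\mathcal{Z}$; thus the closed complement $\mathcal{Z} \setminus \mathcal{S}|_{U'}$ maps to a proper closed subset of $U'$ under the finite map, and excising this image yields a nonempty open $U \subseteq U'$ on which $\pi$ is finite étale of constant rank $n$. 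Then for every algebraically closed $E \supseteq k$ and every $u \in U(E)$, $G_u$ is finite étale of rank $n$, giving $|G_u(E)| = n$.

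The main technical hurdle is passing from quasi-finite to finite in the last step, since $\pi$ is not proper in general; Zariski's main theorem, together with the irreducibility of $X$ and the presence of the identity section, supplies the required finiteness.
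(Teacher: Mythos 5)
The paper does not actually prove this lemma --- it is stated with a \qed{} and attributed to \cite[Lemma 2.2]{GG:simple} as ``well known'' --- so your argument cannot be compared to an internal proof; judged on its own, it is a correct and essentially complete scheme-theoretic proof along standard lines. The reduction to the fiber $G_\xi$ over the image point $\xi$ of $x_0$ by fppf descent, the use of the identity section $e$ to turn semicontinuity on the source $\mathcal{S}$ into semicontinuity on the target $X$ (both for fiber dimension and for the rank of $e^*\Omega^1_{\mathcal{S}/X}$), and the observation that a finite group scheme over a field with vanishing Lie algebra is \'etale (via $\Omega^1_{G_u}\cong\mathcal{O}_{G_u}\otimes e^*\Omega^1_{G_u}$) are all sound; in fact the vanishing of $e^*\Omega^1$ already forces $\pi$ to be unramified, hence quasi-finite, over $U_2$, so your first semicontinuity step is redundant but harmless.

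The one place where the stated justification does not support the claim is in the Zariski's-main-theorem step: the fact that every stabilizer contains the identity gives surjectivity of $\mathcal{S}|_{U'}\to U'$, but it does \emph{not} give that $\mathcal{S}|_{U'}$ exhausts the generic fiber of $\mathcal{Z}$ --- the identity section only produces one point in each fiber. The correct argument is to take $\mathcal{Z}$ to be the scheme-theoretic closure of $\mathcal{S}|_{U'}$ (so that $\mathcal{S}|_{U'}$ is open and dense in $\mathcal{Z}$) and to note that any point $z$ of $\mathcal{Z}$ lying over the generic point $\eta$ of $U'$ has $\overline{\{z\}}$ finite and surjective onto $U'$, hence of dimension $\dim\mathcal{Z}$, so $z$ is a generic point of $\mathcal{Z}$ and therefore lies in the dense open $\mathcal{S}|_{U'}$. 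With that repair, $\mathcal{Z}\setminus\mathcal{S}|_{U'}$ misses the generic fiber, its image under the finite projection is a proper closed subset of $U'$, and the rest of your argument (finite flat of constant degree $n$ over the irreducible complement, base change to algebraically closed $E$) goes through as written.
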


Note that finding some $x_0$ with $G_{x_0} = 1$ does not imply that $G$ acts generically freely on $X$; it is common that such an $x_0$ will exist in cases where $G_x$ is finite \'etale but $\ne 1$ for generic $x$.  This was pointed out already in \cite{AndreevPopov}; see \cite{GurLawther} for more discussion and examples.

Nonetheless, Lemma \ref{point} may be used to prove that an action is generically free as follows.  Suppose $G$, $X$ and the action of $G$ on $X$ can be defined over a countable algebraically closed field $F$ and that $X$ is unirational, i.e., there is an $F$-defined dominant rational map $\phi \!: \Aff^d \dashrightarrow X$ for some $d$.  Adjoin $d$ indeterminates $a_1, \ldots, a_d$ to $F$ and calculate $G_{x_0}$ for $x_0 = \phi(a_1, \ldots, a_d)$.  As $F$ is countable, for $K$ an uncountable algebraically closed field containing $F$, the elements of $\Aff^d(K)$ with algebraically independent coordinates are the complement of countably many closed subsets so are dense.  Therefore, modifying $\phi$ by an $F$-automorphism of $\Aff^d$, the calculation of $G_{x_0}$ implicitly also calculates $G_x$ for $x$ in a dense subset.  In particular, if $G_{x_0} = 1$, then the lemma gives that $G$ acts generically freely on $X$.

\subsection*{Groups whose identity component is a torus}
Suppose that $G$ is an algebraic group whose identity component is a torus $T$.  As $k$ is assumed algebraically closed, the component group $G/T$ is a finite constant group.  We are interested in representations $V$ of $G$ such that $G$ acts generically freely on $V$ or $\P(V)$.  Evidently it is necessary that $T$ acts faithfully on $V$ or $\P(V)$, respectively.

\begin{lem} \label{proj}
Let $G$ be an algebraic group with identity component a torus $T$.  Suppose that $G$ acts linearly on a vector space $V$ such that:
\begin{enumerate}
\item  \label{proj.mult} every weight of $V$ has multiplicity $1$, and
\item  \label{proj.ker} for $\Omega$ the set of weights of $V$, $G/T$ acts faithfully on the kernel of the map $\psi \!:\!~\!\oplus_{\omega \in \Omega} \Z \mapsto T^*$ given by $(n_\omega) \mapsto \sum_\omega n_\omega \omega$.
\end{enumerate}
If $T$ acts faithfully on $V$ (resp., $\P(V)$), then $G$ acts generically freely on $V$ (resp., $\P(V)$).
\end{lem}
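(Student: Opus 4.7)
My plan is to reinterpret the hypotheses geometrically so that an open subvariety of $V$ becomes a principal $T$-bundle over a quotient on which the finite group $G/T$ acts faithfully --- hence generically freely --- giving the result for $V$; then use an auxiliary extension by $\Gm$ to transfer to $\P(V)$.

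\textbf{Step 1 (torsor structure).} Using hypothesis (1), fix a basis vector $e_\omega$ of each one-dimensional weight space, and consider the dense open $V^\circ := \{\sum_\omega c_\omega e_\omega : c_\omega \ne 0 \text{ for all } \omega\}$. Writing $F := \bigoplus_{\omega \in \Omega} \Z$ and $F^\vee := \mathrm{Hom}(F, \Gm) = \Gm^{|\Omega|}$, the coordinates $(c_\omega)$ identify $V^\circ$ with $F^\vee$. Faithfulness of $T$ on $V$ is exactly surjectivity of $\psi : F \to T^*$ (Example \ref{torus}), so dualizing the exact sequence $0 \to K \to F \to T^* \to 0$ gives $1 \to T \to F^\vee \to K^\vee \to 1$, exhibiting the quotient map $\phi : V^\circ \to K^\vee := \mathrm{Hom}(K, \Gm)$ as a principal $T$-bundle.

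\textbf{Step 2 (descent and generic freeness for $V$).} Since $T$ is normal in $G$, the $G$-action on $V^\circ$ descends to a $G/T$-action on $K^\vee$; concretely it is the group-scheme dual of the permutation action of $G/T$ on $K \subseteq F$ induced by the action on $\Omega$. By hypothesis (2) this is faithful, and since $G/T$ is a finite constant group (as $G$ is smooth with identity component $T$ and $k$ is algebraically closed) acting on the irreducible variety $K^\vee$, the fixed locus of each nontrivial element is a proper closed subset; removing their finite union yields a nonempty open $U \subseteq K^\vee$ on which $G/T$ acts freely. For $v \in \phi^{-1}(U)$, any $g \in G_v(k)$ fixes $\phi(v)$, so $\bar g = 1$ in $G/T$, hence $g \in T_v$, which is trivial by Example \ref{torus}. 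The Lie algebra condition is automatic: $\Lie(G_v) \subseteq \Lie(G^\circ) = \Lie(T)$, so $\Lie(G_v) = \Lie(T_v) = 0$. Hence $G_v = 1$ as a group scheme.

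\textbf{Step 3 ($\P(V)$ case).} I would replace $G$ by $G' := G \times \Gm$ acting on $V$ via $(g,\lambda)v = \lambda g v$, with identity component $T' := T \times \Gm$ and $G'/T' = G/T$. Multiplicity one survives (weights $(\omega,1)$), and a direct check shows $T'$ acts faithfully on $V$ iff $T$ acts faithfully on $\P(V)$. The corresponding kernel is $K' := K \cap F_0$, where $F_0 := \ker(\Sigma : F \to \Z,\ (n_\omega) \mapsto \sum_\omega n_\omega)$. Once Steps 1--2 apply to $G'$, we obtain generic freeness of $G'$ on $V$; and because $v \ne 0$ forces $\Gm \cap G'_v = 1$, the projection $G'_v \to G_{[v]}$ is an isomorphism of group schemes, yielding generic freeness of $G$ on $\P(V)$.

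The subtle step is verifying that hypothesis (2) transfers: one needs faithfulness of $G/T$ on $K'$, which is a priori stronger than faithfulness on $K$. The rescue is that $\Sigma$ is permutation-invariant, so $G/T$ preserves $K'$ and acts trivially on the cyclic quotient $K/K'$; hence for $g \in G/T$ of finite order $m$ with $g|_{K'} = 1$, writing $g(k_0) = k_0 + \delta_g$ for a lift $k_0$ of a generator of $K/K'$ yields $g^m(k_0) = k_0 + m\delta_g$, and torsion-freeness of $K' \ni \delta_g$ forces $\delta_g = 0$. Thus $g$ acts trivially on all of $K$, so the two faithfulness conditions coincide and hypothesis (2) suffices.
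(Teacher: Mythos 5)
Your proof is correct, and it takes a genuinely different route from the paper's. The paper argues pointwise: it takes $v = \sum_\chi c_\chi v_\chi$ with the $c_\chi$ algebraically independent, shows via a product/determinant trick that the scalar $gv/v$ is a root of unity lying in $k^\times$, and then turns a relation $\sum z_i \chi_i = 0$ in $\ker\psi$ whose coefficient tuple is moved by $w$ into a multiplicative identity among the $c_\chi$ that algebraic independence forbids; this handles $V$ and $\P(V)$ simultaneously with the single lattice $\ker\psi$, and then invokes the discussion following Lemma \ref{point} to pass from one point with independent coordinates to a dense open set. You instead package the genericity into the torsor $V^\circ \to K^\vee$ and reduce to a finite constant group acting faithfully on an irreducible variety; this is more structural and produces the required open set directly. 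Your treatment of $\P(V)$ is also genuinely different: by passing to $G\times\Gm$ you are forced to work with the smaller lattice $K' = K\cap F_0$, which the paper never needs, and your verification that faithfulness on $K'$ follows from faithfulness on $K$ (triviality of the action on the cyclic quotient $K/K'$ plus torsion-freeness of $K'$) is a necessary extra step and is correctly executed. What the paper's computation buys is brevity and uniformity; what yours buys is a transparent geometric reason for the statement and no appeal to the countable-field/uncountable-extension argument.

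One small imprecision to fix: the descended action of $\bar g \in G/T$ on $K^\vee$ is not just the group-scheme dual of the permutation action on $K$. Choosing a lift $n\in G(k)$ of $\bar g$, one has $n e_\omega = m_\omega e_{\bar g\omega}$ with $m_\omega\in k^\times$, so the action on $F^\vee$, hence on $K^\vee$, is the dual automorphism composed with translation by the class of $(m_\omega)_\omega$ (this class in $K^\vee$ is independent of the lift). This does not damage your argument: the fixed locus of $x\mapsto \sigma(x)\tau$ on a torus is empty or a coset of the fixed subgroup of $\sigma$, hence still a proper closed subset whenever $\sigma\neq 1$, which is exactly what hypothesis (2) guarantees. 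But the sentence describing the descended action should be corrected accordingly.
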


We give a concrete proof.  Alternatively one could adapt the proof of \cite[Lemma 3.3]{MeyerRei:PGL}.

\begin{proof}
As $G$ is the extension of a finite constant group by a torus, it and the representation $V$ are defined over the algebraic closure of the prime field in $k$.
Put $K$ for the algebraic closure of the field obtained by adjoining independent indeterminates $c_\chi$ to $k$ for each weight $\chi$ of $V$.  Fix elements $v_\chi \in V$ generating the $\chi$ weight space for each $\chi$ and put $v := \sum_\chi c_\chi v_\chi \in V \ot K$.  

We have a homomorphism $\delta \!: G_{[v]} \to \Gm$ given by $g \mapsto gv/v$.  Thinking now of the function $\pi \!: V \to \Aff^1$ defined by $\pi(\sum \alpha_\chi v_\chi) = \prod \alpha_\chi$, we see that $\pi(gv) = \pi(\delta(g)v) = \delta(g)^{\dim V} \pi(v)$.  On the other hand, for $\delta'$ denoting the composition $\delta \!: G \to \GL(V) \xrightarrow{\det} \Gm$, as the image of $G$ in $\GL(V)$ consists of monomial matrices, we find that 
 $\pi(gv) = \pm \delta'(g) \pi(v)$.  That is, $\delta(g)^{\dim V} = \pm \delta'(g)$, so the image of $\delta$ in $\Gm$ is the group scheme $\mu_a$ of $a$-th roots of unity for some $a$.

For sake of contradiction, suppose there exists a $g \in (G_{[v]})(K)$ mapping to a non-identity element $w$ in $(G/T)(K)$.  Pick $n \in G(k)$ with the same image $w$, so $g = nt$ for some $t \in T(K)$.  Now $nv_\chi = m_\chi v_{w\chi}$ for some $m_\chi \in k^\times$, and we have an equation
\[
\delta(g) v = ntv = \sum_{\chi} c_\chi \chi(t) m_\chi v_{w\chi},
\]
hence $ \chi(t)  = \delta(g) c_{w\chi} / (c_\chi m_\chi)$ for all $\chi$.

By hypothesis, there exist $\chi_1, \ldots, \chi_r \in \Omega$ and nonzero $z_1, \ldots, z_r \in \Z$ such that $\sum z_i \chi_i = 0$ in $T^*$, yet the tuple $(z_\chi) \in \oplus_{\chi \in \Omega} \Z$ is not fixed by $w$ where $z_\chi = z_i$ if $\chi = \chi_i$ and $z_\chi = 0$ otherwise.
As $\sum z_i \chi_i = 0$, we have
\begin{equation} \label{proj.1}
1 = \prod_i \left( \frac{\delta(g)c_{w\chi_i} }{c_{\chi_i} m_{\chi_i}} \right)^{z_i},
\end{equation}
an equation in $K$, where $\delta(g)$ and the $m_{\chi_i}$ belong to $k^\times$.
But the indeterminates appearing in the numerator correspond to the tuple $(z_{w \cdot \chi})$
whereas those in the denominator correspond to $(z_\chi)$, so the equality \eqref{proj.1} is impossible. 

That is, 
the image of $G_{[v]}$ in the constant group $G/T$ is trivial, and $G_{[v]}$ is contained in $T$.  
Thus $G_v$ or $G_{[v]}$ is trivial by Example \ref{torus}.  Since we have proved that an element with algebraically independent coordinates has trivial stabilizer, $G$ acts generically freely by the discussion following Lemma \ref{point}.
\end{proof}

\begin{eg} \label{proj.m1}
Suppose there is an element $-1 \in (G/T)(k)$ that acts by $-1$ on $T^*$.  Then we may partition $\Omega \setminus \{ 0 \}$ as $P \coprod -P$ for some set $P$.  \emph{If $|P| > \dim T$, then $-1$ acts nontrivially on $\ker \psi$.}  Indeed, there are $n_\pi \in \Z$ for $\pi \in P$, not all zero, so that $\sum n_\pi \pi = 0$ in $T^*$, which provides an element of $\ker \psi$ that is moved by $-1$.
\end{eg}

\subsection*{The group $\AGL_1$}
The following result will be used for groups of type $C$.

Let $k$ be an algebraically closed field of characteristic $p \ge 0$.   Let $X$ be the variety
of monic polynomials of degree $n$ over $k$.   Of course, $X$ is isomorphic to affine space $k^n$
and can also be identified with $k^n/S_n$ (where the coordinates are just the roots
of the polynomial).   Let $X_0$ be the subvariety of $X$ such
that the coefficient of $x^{n-1}$ is $0$  (i.e., the sum of the roots of $f$ is $0$).     Let 
$G = \AGL_1$, the group with $k$-points $\{ \stbtmat{c}{b}{0}{1} \mid c \in k^\times, b \in k \}$, 
so $G$ is a semidirect product $\Gm \ltimes \Ga$ and is isomorphic to a Borel subgroup of $\PGL_2$.    An element $g \in G$ acts on $k$ by
$y \mapsto cy + b$ and we can extend this to an action on $X$ (by acting on each
root of $f$).   Note that $G$ preserves $X_0$ if and only if $p$ divides $n$.  In any
case $\Gm$ does act on $X_0$. 

\begin{lem}  \label{lem:polys}  If $p$ does not divide $n > 2$, then 
$\Gm$ acts generically freely on 
$X_0$.    If $p$ divides $n$ and $n > 4$,  then $G$ acts generically freely on $X_0$.
\end{lem}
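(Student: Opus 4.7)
The plan is to treat the two cases separately, relying on Example~\ref{torus} for the first case and combining an infinitesimal computation with a case analysis of affine orbits for the second.

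For $p \nmid n$ and $n > 2$, I would identify $X_0$ with the vector space with coordinates $e_2, \dots, e_n$, the elementary symmetric functions of the roots.  Scaling roots by $c$ multiplies $e_k$ by $c^k$, so the set of $\Gm$-weights on $X_0$ is $\{2, 3, \dots, n\}$.  Since $n \ge 3$, this set contains both $2$ and $3$, whose gcd is $1$, so $\Gm$ acts faithfully and Example~\ref{torus} immediately gives generic freeness.

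For $p \mid n$ and $n > 4$, I would show that for generic $f \in X_0$ both the Lie algebra $\Lie(G_f)$ and the $k$-points $G_f(k)$ are trivial; by the discussion following Example~\ref{torus} this forces $G_f = 1$.  For the infinitesimal part, the action of $\Lie(G)$ on $X_0$ in the coordinates $e_m$ is $\partial_c \cdot e_m = m\, e_m$ (from scaling) and $\partial_b \cdot e_m = (n - m + 1)\, e_{m-1}$ (from differentiating $f(x) \mapsto f(x-b)$ at $b = 0$).  At a point where every $e_m \ne 0$, the infinitesimal stabilizer equations are the two-term relations
\[
m \, \delta c \cdot e_m + (n - m + 1) \, \delta b \cdot e_{m-1} = 0 \qquad (m = 2, \dots, n).
\]
Since $e_1 = 0$ and $p \mid n$, the $m = 2$ equation is automatic; however, the next one or two equations (taking $m = 3$ if $p \ne 2$, or $m = 3, 4$ if $p = 2$) involve coefficients not divisible by $p$, and together they force $\delta c = \delta b = 0$.

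For the $k$-point part, I would attach to each $\sigma \in S_n \setminus \{\mathrm{id}\}$ the locus $Y_\sigma \subseteq X_0$ of polynomials whose roots are permuted via $\sigma$ by some nontrivial $(c, b) \in G(k)$; since $S_n$ is finite, it suffices to show each $Y_\sigma$ is a proper subvariety.  Split on $c$.  If $c = 1$, then $(c, b)$ is a translation of order $p$ whose orbits on $k$ all have size $p$, so an invariant set of $n$ distinct points is a disjoint union of length-$p$ arithmetic progressions, cutting out a family of dimension at most $n/p + 1$; the constraint $e_1 = 0$ is automatic when $p$ is odd, and when $p = 2$ it forces $b = 0$ unless $4 \mid n$.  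If $c \ne 1$, then $(c,b)$ has a unique fixed point and is conjugate to scaling by a nontrivial $d$-th root of unity with $d \ge 2$; invariant sets of size $n$ are disjoint unions of $d$-orbits around the fixed point, giving a family of dimension at most $1 + n/d \le 1 + n/2$.  In every case this dimension is strictly less than $\dim X_0 = n - 1$ once $n > 4$, so $Y_\sigma \subsetneq X_0$.

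The main obstacle is making sure this case analysis of $Y_\sigma$ really does give a proper subvariety for \emph{every} nontrivial $\sigma$, and the hypothesis $n > 4$ is the tight point of the argument.  Indeed, for $(p, n) = (2, 4)$ the identity $a_1 + a_2 = a_3 + a_4$ is automatic on $X_0$, so the translation $y \mapsto y + (a_1 + a_2)$ realizing $\sigma = (12)(34)$ stabilizes \emph{every} $f \in X_0$; this shows $Y_{(12)(34)} = X_0$ there and pinpoints exactly where the dimension counts in the case $c = 1$, $p = 2$ first become sharp.
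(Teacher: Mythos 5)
Your proposal is correct in substance but routes around the paper's argument in both halves, so it is worth comparing. For the first case the paper works with the discriminant: a scalar $c$ fixing a polynomial with distinct roots satisfies $c^{n(n-1)}=1$, and the fixed space of each such $c$ on $X_0$ is proper; you instead read off the $\Gm$-weights $2,3,\dots,n$ on the coordinates $e_2,\dots,e_n$ and invoke Example~\ref{torus}, which is cleaner, works at the group-scheme level in one stroke (so the scheme-theoretic stabilizer is $\mu_{\gcd(2,3)}=1$), and does not even use $p\nmid n$. For the second case the paper counts $\dim(\text{conjugacy class}) + \dim(\text{fixed space})$ for unipotent and semisimple classes separately (the fixed space of a translation on $X$ has dimension $n/p$, that of a homothety at most $n/2$, each class is $1$-dimensional); you instead parametrize the invariant root configurations directly (arithmetic progressions of length $p$, resp.\ $\mu_d$-orbits about the unique fixed point). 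The two counts are equivalent, and your version has the merit of making explicit both why $e_1=0$ is or is not compatible with a translation and why $(p,n)=(2,4)$ fails, which the paper only asserts. You also carry out the Lie algebra computation explicitly, where the paper merely says the argument ``is identical.''

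One local correction is needed in your infinitesimal computation. For $p$ odd the $m=2$ relation is \emph{not} automatic: it reads $2\,\delta c\, e_2=0$ (the term $(n-1)\,\delta b\, e_1$ vanishes, but the first term does not), and it is exactly this equation that forces $\delta c=0$; the single $m=3$ relation cannot by itself kill two unknowns. The clean statement is that the $2\times 2$ coefficient matrix of the relations $m=2,3$ has determinant $2(n-2)e_2^2$, which is nonzero for $p$ odd because $p\mid n$ implies $p\nmid n-2$ (this also covers $p=3$, where the coefficient $3e_3$ dies but the off-diagonal entry survives); for $p=2$ one uses $m=3,4$ with determinant $e_3^2$, exactly as you say. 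With that repair, and the routine observation that each locus $Y_\sigma$ is constructible so that your dimension bounds place the union inside a proper closed subvariety of $X_0$, the argument is complete.
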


\begin{proof}  We just give the proof of the group of $k$-points.  The proof for the Lie
algebra is identical.  

Note that if $c = (0,c) \in k^\times$ and $f$ has distinct roots, then $c$ fixes $f$ implies that
$c^{n(n-1)} =1 $ since $c$ preserves the discriminant of $f$.   In particular, there
are only finitely many possibilities for $c$.  

Note that the dimension of the fixed point space of multiplication by $c$ on $X$
has dimension at most $n/2$ and so the fixed point space on $X_0$ is a proper
subvariety (because $n > 2$) and has codimension at least $2$ if $n > 4$.

If $p$ does not divide $n$, then we see that there are only finitely many elements
of $k^\times$ which have a fixed space which intersects the open subvariety of $X_0$
consisting of elements with nonzero discriminant.  Thus, the finite union of these fixed spaces
is contained in a proper subvariety of $X_0$ whence for a generic point the
stabilizer is trivial. 

Now suppose that $p$ divides $n$. 
Then translation by $b$ has a fixed space of dimension
$n/p \le n/2$ on $X$ and so similarly the fixed space on $X_0$ has codimension 
at least $2$ for $n > 4$.   

There is precisely one conjugacy class of nontrivial unipotent
elements in $G$ and this class has dimension $1$. Thus the union of all fixed spaces of nontrivial unipotent elements of 
$G$ is contained in a hypersurface for $n > 4$.    Any semisimple element of $G$ is conjugate
to an element of $k^\times$ (i.e., to an element of the form $(0,c)$) and so there are only finitely many such conjugacy classes which 
have fixed points on the locus of polynomials with nonzero discriminant.
Again,   since each class is $1$-dimensional and each fixed space has codimension greater than $1$,
 we see that the union of all fixed spaces is contained in a hypersurface of $X_0$ for $n > 4$.
  \end{proof}
  
 If $n=4$ and $p=2$, then any $f \in X_0$ is fixed by a translation and so the action is
  not generically free. 

\section{Essential dimension}

The essential dimension of an affine group scheme $G$ over a field $k$ can be defined as follows.  For each extension $K$ of $k$, write $H^1(K, G)$ for the cohomology set relative to the fppf (= faithfully flat and finitely presented) site.  For each $x \in H^1(K, G)$, we define $\ed(x)$ to be the minimum transcendence degree of $K_0$ over $k$ for $k \supseteq K_0 \supseteq K$ such that $x$ is in the image of $H^1(K_0, G) \to H^1(K, G)$.  The \emph{essential dimension} of $G$, denoted $\ed(G)$, is defined to be $\max \ed(x)$ as $x$ varies over all extensions $K$ of $k$ and all $x \in H^1(K, G)$.

If $V$ is a representation of $G$ on which $G$ acts generically freely, then $\ed(G) \le \dim V - \dim G$, see, e.g., \cite[Prop.~3.13]{M:ed}.  We can decrease this bound somewhat by employing the following.

\begin{lem} \label{ed.proj}
Suppose $V$ is a representation of an algebraic group $G$.  If there is a $G$-equivariant dominant rational map $V \dashrightarrow X$ for a $G$-variety $X$ on which $G$ acts generically freely, then $\ed(G) \le \dim X - \dim G$.
\end{lem}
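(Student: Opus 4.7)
The plan is to exhibit a versal $G$-torsor whose essential dimension is bounded above by $\dim X - \dim G$, and then invoke the definition of $\ed(G)$ as the supremum of $\ed(x)$ over torsor classes.

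First I would upgrade the hypothesis on $X$ to an analogous statement about $V$: because $\phi \!: V \dashrightarrow X$ is $G$-equivariant and dominant, for any $v$ in the (dense) preimage of the generically free locus of $X$ one has $G_v \subseteq G_{\phi(v)} = 1$, so $G$ acts generically freely on $V$ as well.  This lets me pass to $G$-invariant open subvarieties $U \subseteq V$ and $Y \subseteq X$ with $\phi(U) \subseteq Y$ on which $\phi$ is a morphism and on which geometric quotient maps $\pi_U \!: U \to U/G$ and $\pi_Y \!: Y \to Y/G$ exist as $G$-torsors.  The morphism $\phi$ descends to $\bar\phi \!: U/G \to Y/G$, and the universal property of the fiber product yields a $G$-equivariant morphism
\[
U \longrightarrow Y \times_{Y/G} (U/G)
\]
over $U/G$.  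Since both source and target are $G$-torsors over $U/G$, this is an isomorphism.  This is the key structural step.

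Next, I would invoke the standard fact that, because $V$ is a linear representation (so in particular unirational and generically a $G$-torsor over its rational quotient), the generic fiber $\Spec k(V) \to \Spec k(V)^G$ is a versal $G$-torsor; see, e.g., \cite[Prop.~3.13]{M:ed}, which is the tool underlying the familiar bound $\ed(G) \le \dim V - \dim G$.  By the previous paragraph, this versal torsor is obtained by base change from the $G$-torsor $\Spec k(Y) \to \Spec k(Y)^G$ along the extension $k(X)^G = k(Y)^G \hookrightarrow k(U)^G = k(V)^G$ induced by $\bar\phi$.

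Because the class of a versal torsor has essential dimension equal to $\ed(G)$, and because any torsor pulled back from $k(X)^G$ has essential dimension at most $\trdeg_k k(X)^G = \dim X - \dim G$ (using generic freeness of the $G$-action on $X$ to get the dimension count), the bound $\ed(G) \le \dim X - \dim G$ follows.  The main obstacle is the second step, verifying that $U \to Y \times_{Y/G} U/G$ is genuinely an isomorphism of torsors rather than just an equivariant map; everything else is bookkeeping about function fields and transcendence degrees.
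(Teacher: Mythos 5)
Your argument is correct and is essentially the paper's proof with the black box opened up: the paper likewise first notes that $G$ must act generically freely on $V$, observes that $V$ is then a versal generically free $G$-variety and $V \dashrightarrow X$ a $G$-compression, and cites \cite[Prop.~3.11]{M:ed} for the bound $\ed(G) \le \dim X - \dim G$. Your torsor-pullback construction $U \cong Y \times_{Y/G} (U/G)$ together with the versality of the generic fiber of $V$ is precisely the content of that cited proposition, so the two proofs coincide in substance.
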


\begin{proof}
Certainly, $G$ must act generically freely on $V$.
In the language of \cite{DuncanRei} or \cite[p.~424]{M:ed}, then, $V$ is a versal and generically free $G$-variety and the natural map $V \dashrightarrow X$ is a $G$-compression.  Therefore, referring to \cite[Prop.~3.11]{M:ed}, we find that $\ed(G) \le \dim X - \dim G$.
\end{proof}

The following lemma was pointed out in \cite{BRV:arxiv}.

\begin{lem} \label{BRV.lem}
Let $V$ be a faithful representation of a (connected) reductive group $G$.  Then $\ed(G) \le \dim V$.
\end{lem}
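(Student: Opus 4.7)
The plan is to realize $V$ as a generically-versal $G$-variety by combining the faithful embedding $G \hookrightarrow \GL(V)$ with a compression argument, then reading off the bound from the transcendence degree of $k(V)$.

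Since $V$ is a faithful representation, $G$ sits as a closed subgroup of $\GL(V)$. Hilbert's Theorem~90 gives $H^1(K, \GL(V)) = 1$ for every extension $K/k$, so the long exact sequence in fppf cohomology attached to $1 \to G \to \GL(V) \to \GL(V)/G \to 1$ identifies every $G$-torsor over $K$ with the fiber of the quotient morphism $\GL(V) \to \GL(V)/G$ over some $K$-point. In other words, $\GL(V) \to \GL(V)/G$ is a versal $G$-torsor, and a priori this alone gives only the crude bound $\ed(G) \le \dim \GL(V)/G = (\dim V)^2 - \dim G$.

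The compression step then brings the target down to $V$. Fix a nonzero $v_0 \in V$ and consider the orbit map $\alpha \colon \GL(V) \to V$, $g \mapsto g v_0$. The identity $(hg) v_0 = h(gv_0)$ makes $\alpha$ equivariant for the $G$-action by left multiplication on $\GL(V)$ and by the given representation on $V$, and dominance follows from the transitivity of $\GL(V)$ on $V \setminus \{0\}$. Thus $V$ receives a dominant $G$-equivariant rational map from the versal $G$-variety $\GL(V)$, making $V$ itself a versal $G$-variety in the relevant sense. One then concludes $\ed(G) \le \trdeg_k k(V) = \dim V$.

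The main obstacle is this last step. Lemma~\ref{ed.proj} as stated requires the compression target to carry a generically free $G$-action in order to produce the sharper bound $\dim V - \dim G$; here we aim only for the weaker bound $\dim V$, \emph{without} assuming generic freeness, so a different argument is needed. The natural route, and the one that uses the hypothesis that $G$ is connected reductive, is to invoke Luna's slice theorem (available thanks to reductivity) at a closed orbit in $V$, describing the generic $G$-torsor on $V$ in terms of a point of a variety of dimension at most $\dim V$ together with a residual torsor for the generic stabilizer $H$; the contribution of $H$ (itself reductive) can then be absorbed into the $\dim V$ count. Both the existence of the slice and the control over the residual stabilizer are the places where connectedness and reductivity of $G$ are used.
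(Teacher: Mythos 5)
There is a genuine gap at exactly the point you flag. The first half of your argument is fine but only establishes that $V$ is a versal $G$-variety; without generic freeness of $G$ on $V$ this does not yield any bound on $\ed(G)$, because the generic fiber of $V \dashrightarrow V/G$ is not a $G$-torsor and so cannot serve as a field of definition for the versal torsor. Your proposed repair via Luna's slice theorem does not close the gap. First, Luna's theorem requires linear reductivity, i.e.\ essentially characteristic zero; the lemma is needed (and is applied in \S\ref{spin.sec} to $\Spin_n$) over fields of arbitrary characteristic, where reductive groups are not linearly reductive and étale slices need not exist. Second, even where a slice is available, the step ``the contribution of $H$ can be absorbed into the $\dim V$ count'' is not an argument: the stabilizer of a generic point of $V$ need not be reductive (only stabilizers of points on \emph{closed} orbits are), and bounding the essential dimension of the residual $H$-torsor by the dimension of its slice representation is precisely an instance of the statement you are trying to prove, so the reasoning is circular as written.

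The paper's proof goes a different way and is worth internalizing. By the theorem of Chernousov--Gille--Reichstein, for $T$ a maximal torus there is a \emph{finite} subgroup scheme $S \le N_G(T)$ with $H^1(K,S) \to H^1(K,G)$ surjective for all $K$ (this is where connectedness and reductivity enter), so $\ed(G) \le \ed(S)$. Then one checks that $S$ acts generically freely on $V$: for generic $v$ one has $\Lie(S_v) \subseteq \Lie(T)_v = 0$ since $T$ acts with trivial generic stabilizer on the faithful module $V$, and the finite group $S(k)$ acts faithfully, hence generically freely; therefore $\ed(S) \le \dim V - \dim S = \dim V$. If you want to pursue a torsor-theoretic proof instead, you must first replace $G$ by a subgroup for which the action on $V$ (or on a space of dimension $\le \dim V$) is generically free; that reduction is the real content of the lemma.
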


\begin{proof}
Fix a maximal torus $T$ in $G$.  By \cite{ChGiRei:semi} there is a finite sub-$k$-group-scheme $S$ of $N_G(T)$ so that the natural map of fppf cohomology sets $H^1(K, S) \to H^1(K, G)$ 
is surjective for every extension $K$ of $k$, so $\ed(G) \le \ed(S)$.

For generic $v$ in $V$, the stabilizer $S_v$ of $v$ in $S$ has $\Lie(S_v) \subseteq \Lie(S)_v \subseteq \Lie(T)_v = 0$, so $S_v$ is smooth.  As the finite group $S(k)$ acts faithfully on $V$,  $S(k)$ acts generically freely, so $\ed(S) \le \dim V$.
\end{proof}

While the lemma gives cheap upper bounds on $\ed(G)$, it is not sufficient to deduce Theorem \ref{big.O} even in the coarse sense of big-$O$ notation: the minimal faithful representations of $\SL_n / \mu_m$ are too big for $3 \le m < n$, being at least cubic in $n$ whereas the dimension of the group is $n^2 - 1$.

\section{The short root representation} \label{short.sec}

Let $G$ be an adjoint simple algebraic group and put $V$ for the Weyl module with highest weight the highest short root.  Fixing a maximal torus $T$ in $G$, the weights of this representation are 0 (with some multiplicity) and the short roots $\Omega$ (each with multiplicity 1), and we put $\Vb$ for $V$ modulo the zero weight space.  It is a module for $N_G(T)$.

\begin{prop} \label{short}
Suppose $k$ is algebraically closed.
If $G$ is of type $A_n$ ($n \ge 2$), $C_n$ ($n \ge 3)$, $D_n$ ($n \ge 4$), $E_6$, $E_7$, $E_8$, or $F_4$, then $N_G(T)$ acts generically freely on $\P(\Vb)$ and
\[
\ed(N_G(T)) \le |\Omega| - \dim T - 1.
\]
\end{prop}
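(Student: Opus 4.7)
The plan is to invoke Lemma~\ref{proj} for the group $N_G(T)$---whose identity component is $T$ and whose component group is the Weyl group $W = N_G(T)/T$---acting linearly on $\Vb$. Generic freeness of this action on $\P(\Vb)$, combined with Lemma~\ref{ed.proj} applied to the natural $N_G(T)$-equivariant dominant rational map $\Vb \dashrightarrow \P(\Vb)$, immediately yields
\[
\ed(N_G(T)) \;\le\; \dim \P(\Vb) - \dim N_G(T) \;=\; |\Omega| - \dim T - 1,
\]
which is the claimed bound.

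Two of the three hypotheses of Lemma~\ref{proj} (in its projective form) are routine. By construction, the weights of $\Vb$ are the short roots $\Omega$, each of multiplicity one. Since $G$ is adjoint, $T^*$ equals the root lattice $Q$, and one checks in each type that the differences $\omega - \omega'$ of short roots generate $Q$, so $T$ acts faithfully on $\P(\Vb)$; for example, in type $A_n$, $(e_i - e_j) - (e_i - e_k) = e_k - e_j$ is itself a root, and analogous identities produce a spanning set in each of the other listed types.

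The main step is to verify the remaining hypothesis: that $W$ acts faithfully on $\ker \psi$, where $\psi \colon \bigoplus_{\omega \in \Omega} \Z \to Q$ sends $(n_\omega) \mapsto \sum n_\omega \omega$. My plan is a character-theoretic reduction. Since the short roots span $Q_\mathbb{Q}$, we have a short exact sequence $0 \to \ker \psi \otimes \mathbb{Q} \to \mathbb{Q}[\Omega] \to Q_\mathbb{Q} \to 0$ of $W$-modules, giving $\chi_{\ker \psi}(w) = |\Omega^w| - \mathrm{tr}(w \mid Q_\mathbb{Q})$. Faithfulness on $\ker \psi$ is equivalent to faithfulness on $\ker \psi \otimes \mathbb{Q}$, and hence to the strict inequality
\[
|\Omega| - |\Omega^w| \;>\; \dim Q - \mathrm{tr}(w \mid Q_\mathbb{Q}) \quad \text{for every } w \ne 1 \text{ in } W,
\]
i.e., the number of short roots moved by $w$ must strictly exceed the codimension of its fixed subspace in $Q_\mathbb{Q}$.

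The main obstacle is the case analysis required by this inequality. For types with $-1 \in W$ (namely $C_n$, $D_{2n}$, $E_7$, $E_8$, and $F_4$), Example~\ref{proj.m1} already handles $w = -1$. The other conjugacy classes---and every $w \ne 1$ in the types $A_n$, $D_{2n+1}$, and $E_6$ where $-1 \notin W$---are treated using standard descriptions of the Weyl group: as explicit permutation groups for the classical types $A_n$, $C_n$, $D_n$, and via Carter's tabulation of conjugacy classes and their fixed root subsystems for the exceptional types. The tightest case to examine is a simple reflection $s_\alpha$, whose fixed subspace has codimension $1$ but which moves every short root not orthogonal to $\alpha$---a quantity much greater than $1$ in each of the listed root systems. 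For $A_n$ one can instead argue directly: if $w \in S_{n+1}$ fixes $f < n+1$ indices, then $|\Omega^w| = f(f-1)$ and $\mathrm{tr}(w\mid Q_\mathbb{Q}) = f - 1$, so $\chi_{\ker \psi}(w) = (f-1)^2 < n^2 = \dim \ker \psi$, as required.
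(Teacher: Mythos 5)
Your overall skeleton matches the paper's: apply Lemma \ref{proj} to $N_G(T)$ acting on $\Vb$ and then Lemma \ref{ed.proj} to get the dimension count, with the faithfulness of $T$ on $\P(\Vb)$ handled by showing differences of short roots span $T^*$. Your reduction of hypothesis \ref{proj}\eqref{proj.ker} to the trace inequality $|\Omega| - |\Omega^w| > \dim Q - \mathrm{tr}(w \mid Q\ot\mathbb{Q})$ for every $w \ne 1$ is correct (a finite-order element acts trivially on a $\mathbb{Q}$-vector space iff its trace equals the dimension, and $\ker\psi$ is free, so faithfulness may be tested after $\ot\,\mathbb{Q}$), and your type $A_n$ computation $(f-1)^2 < n^2$ is complete and right. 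One caution on the prose: your paraphrase "the number of short roots moved by $w$ must exceed the codimension of its fixed subspace" is \emph{not} equivalent to the displayed inequality --- the correct right-hand side is $\sum_i(1-\lambda_i)$ over the eigenvalues of $w$ on $Q\ot\mathbb{Q}$, which already equals $2$ for a reflection and $2\,\rk Q$ for $w=-1$; verifying only the codimension version would not suffice.

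The genuine gap is that for types $C_n$, $D_n$, $E_6$, $E_7$, $E_8$, and $F_4$ you have not actually verified the inequality: deferring to "standard descriptions" and "Carter's tabulation" leaves the entire content of the proposition unproved for those types (e.g.\ $W(E_8)$ has $112$ conjugacy classes, and the tight cases are not only reflections). The paper sidesteps this workload with one observation you should adopt: the kernel of the $W$-action on $\ker\psi$ is a \emph{normal} subgroup of $W$, so it suffices to exhibit, for each minimal normal subgroup of $W$ (equivalently, for $w=-1$ and for suitable representatives when $-1$ fails to generate things), a single element of $\ker\psi$ that is moved. Concretely, the paper takes a non-orthogonal pair of short simple roots $\chi_1,\chi_2$ with $\chi_3=-\chi_1-\chi_2$ to dispose of $w=-1$ (hence of $F_4$, where every nontrivial normal subgroup contains $-1$), takes $\{\chi_1,-\chi_1\}$ with $w\chi_1\ne\pm\chi_1$ for $w\ne\pm1$ in the simply laced types, and treats the extra normal subgroup $(\Z/2)^{n-1}$ of $W(C_n)$ by hand. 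Either you carry out your trace inequality class-by-class for the remaining five families (a substantial but routine verification you have not done), or you restructure around the normality of the kernel; as written, the proposal is not yet a proof outside type $A$.
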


The inequality in the proposition is reminiscent of the one in \cite[Th.~1.3]{Lemire}.

\begin{eg} \label{A1.eg}
The group $\PGL_n$ is adjoint of type $A_{n-1}$ and we identify it with the quotient of $\GL_n$ by the invertible scalar matrices.  We may choose $T \subset \PGL_n$ to be the image of the diagonal matrices and $N_G(T)$ is the image of the monomial matrices.  The representation $\Vb$ is the space of matrices with zeros on the diagonal, on which $N_G(T)$ acts by conjugation.

With this notation, for type $A_1$, the stabilizer in $N_G(T)$ of a generic element $v := \stbtmat{0}{x}{y}{0}$ of $\Vb$ is $\Z/2$, with nontrivial element the image of $v$ itself.  So type $A_1$ is a genuine exclusion from the proposition.
\end{eg}

\begin{proof}[Proof of Proposition \ref{short}]
It suffices to prove that $N_G(T)$ acts generically freely, for then the inequality follows by Lemma \ref{ed.proj}.  We apply Lemma \ref{proj}.  

For every short root $\alpha$, there is a short root $\beta$ such that $\qform{\beta, \alpha} = \pm 1$.  
(If $\alpha$ is simple, take $\beta$ to be simple and adjacent to $\alpha$ in the Dynkin diagram.  Otherwise, $\alpha$ is in the Weyl orbit of a simple root.)  Thus, the kernel of $T \to \PGL(\Vb)$ is contained in the kernel of $\beta - s_\alpha(\beta) = \pm\alpha$.  As the lattice generated by the short roots $\alpha$ is the root lattice $T^*$, it follows that $T$ acts generically freely on $\P(\Vb)$.

So it suffices to verify \ref{proj}\eqref{proj.ker}.  Fix $w \ne 1$ in the Weyl group; we find short roots $\chi_1, \ldots, \chi_r$ such that $\sum \chi_i = 0$ and the set $\{ \chi_i \}$ is not $w$-invariant.

If $w = -1$, then take $\chi_1$, $\chi_2$ to be non-orthogonal short simple roots.  They generate an $A_2$ subsystem and we set $\chi_3 := -\chi_1 - \chi_2$.  (Alternatively, apply Example \ref{proj.m1}.)  This proves the claim for type $F_4$: the kernel of the $G/T$-action on $\ker \psi$ is a normal subgroup of the Weyl group not containing $-1$, and therefore it is trivial.  

If $G$ has type $A$, $D$, or $E$, then all roots are short.  As $w \ne \pm 1$, there is a short simple root $\chi_1$ such that $w(\chi_1) \ne \pm \chi_1$.  (Indeed, otherwise there would be simple roots $\alpha, \alpha'$ such that $w(\alpha) = \alpha$, $w(\alpha') = -\alpha'$, yet $\alpha$ and $\alpha'$ are adjacent in the Dynkin diagram.)  Take $\chi_2 = -\chi_1$.

For type $C_n$ ($n \ge 3$), as in \cite{Bou:g4} we may view the root lattice $\Z[\Phi]$ as contained in a copy of $\Z^n$ with basis $\e_1, \ldots, \e_n$.  If the kernel of the $G/T$-action on $\ker \psi$ does not contain $-1$, it contains the group $H$ isomorphic to $(\Z/2)^{n-1}$ consisting of those elements that send $\e_i \mapsto -\e_i$ for an even number of indexes $i$ and fix the others.  Taking $\chi_1 = \e_1 - \e_2$, $\chi_2 = \e_2 - \e_3$, and $\chi_3 = -\chi_1 -\chi_2$ gives a set $\{ \chi_i \}$ not stabilized by $H$.
\end{proof}

\section{Groups of type $C$: Proof of Theorem \ref{PSp}}

Let $G$ be the adjoint group of type $C_n$ for $n > 3$ over an algebraically closed field $k$ of characteristic 
$p$.    Let $W:=L(\omega_2)$ be the irreducible module for $G$ with highest weight $\omega_2$ where
$\omega_2$ is the the second fundamental dominant weight (as numbered in \cite{Bou:g4}).     We view $W$ as the unique irreducible nontrivial $G$-composition factor of
$Y:=\wedge^2(V)$ where $V$ is the natural module for $\Sp_{2n}$.   We recall that $Y = W \oplus k$ if $p$ does not divide $n$.
If $p$ divides $n$, then $Y$ is uniserial of length $3$ with 1-dimensional socle and radical.  Any element in $Y$ has characteristic polynomial $f^2$ where 
$f$ has degree $n$, and the radical $Y_0$ of $Y$ is the set of elements with the roots of $f$ summing to $0$.  (Note that aside from characteristic $2$,  $Y_0$
are the elements of trace $0$ in $Y$.)   

 In particular, 
$\dim W = 2n^2-n - 1$ if $p$ does not divide $n$ and $\dim W = 2n^2-n-2$ if $p$ does divide $n$.

As in \cite{GoGu}, we view  $Y$ as the set of skew adjoint operators on $V$ with respect to the alternating form defining $\Sp_{2n}$ with
$G$ acting as conjugation on $Y$. 

\begin{prop}   \label{Sp}  If $n > 3$ and $(n,p) \ne (4,2)$, then 
$G$ acts generically freely on $W \oplus W$ and on 
$\P(W) \times \P(W)$.  
\end{prop}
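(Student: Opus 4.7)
The plan is to reduce the statement to a generic-stabilizer computation inside the centralizer of a single generic element of $Y$. Since $W$ differs from $Y = \wedge^2 V$ (or, when $p \mid n$, from $Y_0$) only by a trivial $G$-submodule, and since $G$ preserves characteristic polynomials, the stabilizer in $G$ of a generic $\bar M \in W$ coincides with that of any of its lifts $M \in Y$. It therefore suffices to show $G$ acts generically freely on $Y \oplus Y$ (respectively $Y_0 \oplus Y_0$); moreover, if $g M_1 g^{-1} = c M_1$ for some scalar $c$, then equating characteristic polynomials gives $c^n f(x/c) = \pm f(x)$ for $f$ the square root of the characteristic polynomial, which for generic separable $f$ forces $c = 1$. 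Consequently the statement for $\P(W) \times \P(W)$ reduces to the affine one.

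The first major step is to compute the centralizer of a single generic $M_1 \in Y$. Viewing $Y$ inside $\mathrm{End}(V)$ via the symplectic form, a generic such $M_1$ has characteristic polynomial $f(x)^2$ with $f$ separable of degree $n$; the eigenspace decomposition $V = V_1 \oplus \cdots \oplus V_n$ consists of pairwise-orthogonal non-degenerate symplectic planes indexed by the roots of $f$. The centralizer of $M_1$ in $\Sp(V)$ is therefore $\prod_i \Sp(V_i) \cong (\SL_2)^n$, and its image in $G = \PSp(V)$ is a subgroup $H$ which I will use in place of $G$ for the remaining computation.

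The second major step is to decompose $W$ (and $Y$) as an $H$-module and compute the stabilizer of a second generic element in $H$. The restriction
\[
\wedge^2 V \;=\; \bigoplus_i \wedge^2 V_i \;\oplus\; \bigoplus_{i<j} V_i \otimes V_j
\]
has its first summand trivial for $(\SL_2)^n$ and each $V_i \otimes V_j$ four-dimensional. For a generic $M_2 \in W$ with off-diagonal components $A_{ij} \in \mathrm{Hom}(V_j, V_i)$ that are isomorphisms, the condition $(g_1, \ldots, g_n) \cdot M_2 = M_2$ becomes $g_i A_{ij} = A_{ij} g_j$ for every $i < j$. Composing around a triangle $(i, j, k)$ shows $g_i$ commutes with $A_{ij} A_{jk} A_{ki} \in \GL(V_i)$, which is regular semisimple for generic data. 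Since $n \geq 4$, each vertex lies on at least three distinct triangles, and for generic $A_{ij}$ the corresponding products have pairwise different centralizers in $\SL(V_1)$; intersecting forces $g_1 \in \{\pm I\}$. Propagating via $g_j = A_{1j}^{-1} g_1 A_{1j}$ then gives $g_i = \pm I$ uniformly, which is trivial in $H$.

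When $p \mid n$ the module $Y_0$ supports an additional direction $M \mapsto M + bI$ (since $I \in Y_0$ in this case), and the generic freeness of this direction is exactly the $\AGL_1$-part of Lemma \ref{lem:polys}, which fails precisely for $n = 4$, $p = 2$. The main obstacle of the whole argument is the triangle-cycle stabilizer computation: one must verify that for generic $A_{ij}$ the conjugation relations imposed on $g_1 \in \SL_2$ by distinct triangles are genuinely independent (rather than collapsing to a single maximal torus), so that their intersection is the scheme-theoretic group $\{\pm I\}$. This is where the hypotheses $n \geq 4$ and $(n, p) \neq (4, 2)$ enter, and where the generic-stabilizer computation of \cite{GoGu} is invoked.
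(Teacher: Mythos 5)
Your reduction steps coincide with the paper's: you pass from $\P(W)\times\P(W)$ (and $W\oplus W$) to $Y\oplus Y$, resp.\ $Y_0\oplus Y_0$, exactly as the paper does, using Lemma \ref{lem:polys} to rule out the scalar $c$ and the translation $b$, and you identify the scheme-theoretic stabilizer of a single generic element of $Y$ as $\Sp_2^{\times n}=\SL_2^{\times n}$, again as in the paper. One misstatement here: the hypothesis $(n,p)\neq(4,2)$ enters only in this reduction --- it is precisely the failure of Lemma \ref{lem:polys} for $n=4$, $p=2$ --- and not in your final stabilizer computation, where only $n\geq 4$ is used.

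Where you genuinely diverge is the last step. The paper disposes of the second copy by a global argument: since the stabilizer in $\GL_{2n}$ of a generic point of $Y$ is a conjugate of $\Sp_{2n}$, the generic stabilizer for $\Sp_{2n}$ on $Y\oplus Y$ equals the generic stabilizer for $\GL_{2n}$ on $Y\oplus Y\oplus Y$, which \cite{GoGu} shows is central. You instead restrict a second generic element to $H=\SL_2^{\times n}$ and run a direct triangle computation. That route is viable and more self-contained in spirit, but as written it has a hole at its crux: you assert, without verification, that for generic skew-adjoint data the products $A_{ij}A_{jk}A_{ki}$ around two different triangles through a fixed vertex are regular semisimple with distinct centralizers, so that the intersection of those centralizers is the scheme-theoretic $\mu_2$. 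These are open conditions, so it would suffice to exhibit one witness compatible with the constraint that $A_{ji}$ is determined by $A_{ij}$ via adjointness, but you do not produce one. Moreover, your appeal to \cite{GoGu} to close this gap is misplaced: that reference computes the generic stabilizer of $\GL_{2n}$ on $Y^{\oplus 3}$ (equivalently, intersections of generic conjugates of $\Sp_{2n}$ in $\SL_{2n}$), i.e., it proves the statement the paper's route uses, not the triangle-independence your route needs. Either supply the witness for the independence of the triangle conditions, or adopt the paper's reduction to \cite{GoGu} outright.
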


\begin{proof}    
Any element $y \in Y$ is conjugate to an element of the form $\diag(A, A^{\top})$ acting on a direct sum of totally singular subspaces.
A generic element of $Y$ is thus an element where $A$ is semisimple regular.   Writing $V = V_1 \perp \ldots \perp V_n$ where the $V_i$
are 2-dimensional nonsingular spaces on which $y$ acts as a scalar, we see that a generic point of $Y$ has stabilizer 
(as a group scheme) $\Sp_2^{\times n} = \SL_2^{\times n}$ in $\Sp_{2n}$
(and by \cite{GoGu}, this precisely the intersection of two generic conjugates of $\Sp_{2n}$ in
$\SL_{2n}$).    The same argument shows that this is true for a generic point of $Y_0$.

In particular if $p$ does not divide $n$, the same is true for $W=Y_0$.  
It follows by Lemma \ref{lem:polys} that for generic $w \in W$,  $gw =cw$
for $g \in G$ and $c \in k^x$ implies that $c=1$.  Thus, the stabilizer of a generic
point in $\P(W)$ still has stabilizer $\Sp_2^{\times n} = \SL_2^{\times n}$.

If $p$ does divide $n$, then $W = Y_0/k$ where we identify $k$ with the scalar matrices in $Y$.
We claim that (for $n > 4$) the generic stabilizer is still 
$\Sp_2^{\times n} = \SL_2^{\times n}$ on $\P(W)$.   
Again, this follows by Lemma \ref{lem:polys} since if $gw = cw + b$ with
$b,c \in k$ and $g \in G$, then for $w$ generic,  $b=0$ and $c=1$.

It is straightforward to see that the same is true for $W$, because for a generic point anything stabilizing $y$ modulo scalars
must stabilize $y$ (see Lemma \ref{lem:polys}).    Thus, in all cases, the generic
stabilizer of a point in $\P(W) \times \P(W)$ is the same as for $Y \oplus Y$.  

Consider $\GL_{2n}$ acting on $Y \oplus Y \oplus Y$.   The stabilizer of a generic point of $Y$ is clearly a conjugate of $\Sp_{2n}$.
It follows from \cite{GoGu} that the stabilizer of a generic element of $Y \oplus Y \oplus Y$ is central. (The result is only stated
for the algebraic group but precisely the same proof holds for the group scheme.)     Thus, the same holds for $\Sp_{2n}$ acting
on $Y \oplus Y$ and so also on $\P(W) \times \P(W)$.  The result follows. 
\end{proof}

We can now improve and extend Lemire's bound for $\ed(\PSp_{2n})$ from \cite[Cor.~1.4]{Lemire} both numerically and to fields of all characteristics.

\begin{proof}[Proof of Theorem \ref{PSp}]   
For the first inequality, the group $\GL_{2n}/\mu_2$ has an open orbit on $\wedge^2(k^{2n})$, and the stabilizer of a generic element is $\PSp_{2n}$.  Consequently, the induced map $H^1(K, \PSp_{2n}) \to H^1(K, \GL_{2n}/\mu_2)$ is surjective for every field $K$, see \cite[Th.~9.3]{G:lens} or \cite[\S{III.2.1}]{SeCG}.  (Alternatively, the domain classifies pairs $(A, \sigma)$ where $A$ is a central simple algebra of degree $2n$ and exponent 2 and $\sigma$ is a symplectic involution \cite[29.22]{KMRT}, and the codomain classifies central simple algebras of degree $2n$ and exponent $2$.  The map is the forgetful one $(A, \sigma) \mapsto A$.)  Thus $\ed(\GL_{2n}/\mu_2) \le \ed(\PSp_{2n})$.

For the second inequality, 
assume that $n \ge 4$ and if $n=4$, then $p \ne 2$. 
As $\PSp_n$ acts generically freely on $\P(W) \times \P(W)$,
$\ed(G) \le  2(\dim \P(W)) - \dim G$ by Lemma \ref{ed.proj}.  Theorem \ref{PSp} follows, because 
 $\dim \P(W) = 2n^2 - n - \delta$ where $\delta = 3$ if $p$ divides $n$ and 2 otherwise.
 
If $n=4$ and $p=2$, $G$ still acts generically freely on $Y_0 \oplus Y_0$.  
Indeed, arguing as above we see that $G$ acts generically freely on 
$\P(Y_0) \times \P(Y_0)$  and the result follows in this case. 
\end{proof} 

\section{Groups of type $A$: proof of Theorem \ref{A}} \label{A.sec}

For the proofs of Theorems \ref{big.O}, \ref{A}, and \ref{ed.thm}, we use the fact that $\ed(N_G(T)) \ge \ed(G)$, because for every field $K \supseteq k$, the natural map $H^1(K, N_G(T)) \to H^1(K, G)$ is surjective (which in turn holds because, for $K$ separably closed, all maximal $K$-tori in $G$ are $G(K)$-conjugate).

Let $T$ be a maximal torus in $G := \PGL_n$ for some $n \ge 4$.  The representation $\Vb$ of $N_G(T)$ from \S\ref{short.sec} may be identified with the space of $n$-by-$n$ matrices with zeros on the diagonal.  It decomposes as $\Vb = \oplus_{i=1}^n W_i$, where $W_i$ is the subspace of matrices whose nonzero entries all lie in the $i$-th row; $N_G(T)$ permutes the $W_i$'s.

\begin{lem} \label{projs}
If $n \ge 4$, then
$N_G(T)$ acts generically freely on $X := \P(W_1) \times \P(W_2) \times \cdots \times \P(W_n)$.
\end{lem}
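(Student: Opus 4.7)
Following the strategy described after Lemma~\ref{point}, it suffices to exhibit a single $K$-point of $X$ with trivial stabilizer in $N_G(T)(K)$ for a suitable transcendental extension $K$ of the prime subfield. I would introduce $n(n-1)$ algebraically independent transcendentals $c_{ij}$ (for $i \ne j$) over the algebraic closure $F$ of the prime field, set $K := F(c_{ij})$, and take $v_i := \sum_{j \ne i} c_{ij} E_{ij} \in W_i \otimes K$ together with $x_0 := ([v_1], \ldots, [v_n]) \in X(K)$.

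Writing any $g \in N_G(T)(K)$ as $g = t\, n_\sigma$ with $t = \diag(t_1, \ldots, t_n)$ and $n_\sigma$ a permutation-matrix lift of $\sigma \in S_n$, one has $g \cdot E_{ij} = (t_{\sigma(i)}/t_{\sigma(j)})\, E_{\sigma(i),\sigma(j)}$, so $g$ fixes $x_0$ precisely when there exist $\mu_i \in K^\times$ with
\begin{equation} \label{projs.star}
c_{\sigma(i),\sigma(j)}\, \mu_i \;=\; c_{ij}\, t_{\sigma(i)}/t_{\sigma(j)} \qquad (j \ne i).
\end{equation}
For $\sigma = 1$ this reads $\mu_i = t_i/t_j$; varying $j$ and using $n \ge 3$ forces the $t_j$'s to be all equal, hence $t$ trivial in $T \subset \PGL_n$ and $g = 1$.

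Suppose $\sigma \ne 1$. Solving \eqref{projs.star} for $\mu_i$ and requiring the result to be independent of $j$ forces $t_{\sigma(j_1)}/t_{\sigma(j_2)}$ to equal $(c_{i,j_1}\, c_{\sigma(i),\sigma(j_2)})/(c_{i,j_2}\, c_{\sigma(i),\sigma(j_1)})$ for any $j_1, j_2 \ne i$. Since the left side does not depend on $i$, equating the right sides for two choices $i_1, i_2$ of $i$ (both distinct from $j_1, j_2$) yields
\begin{equation} \label{projs.dagger}
c_{i_1,j_1}\, c_{\sigma(i_1),\sigma(j_2)}\, c_{i_2,j_2}\, c_{\sigma(i_2),\sigma(j_1)} \;=\; c_{i_1,j_2}\, c_{\sigma(i_1),\sigma(j_1)}\, c_{i_2,j_1}\, c_{\sigma(i_2),\sigma(j_2)}.
\end{equation}
By algebraic independence of the $c_{ij}$, this can hold only if the multisets of index pairs on the two sides coincide.

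The main obstacle, and the place where the hypothesis $n \ge 4$ is used, is choosing four distinct indices $i_1, i_2, j_1, j_2$ so that the two multisets differ. I would pick $i_1$ moved by $\sigma$ (possible since $\sigma \ne 1$), then $i_2 \notin \{i_1, \sigma^{-1}(i_1)\}$, and finally $j_1, j_2$ distinct from each other and from both $i_1$ and $i_2$; existence of these last two indices is guaranteed exactly by $n \ge 4$. With this choice, $\sigma(i_1)$, $i_2$, and $\sigma(i_2)$ are all distinct from $i_1$, so $c_{i_1, j_1}$ is the only factor on the left side of \eqref{projs.dagger} whose first index is $i_1$, while $c_{i_1, j_2}$ is the only such factor on the right; since $j_1 \ne j_2$, the two sides differ as monomials, contradicting \eqref{projs.dagger}. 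This rules out every nontrivial $\sigma$ and completes the proof.
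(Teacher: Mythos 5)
Your argument is correct, and it reaches the conclusion by a genuinely different route from the paper. The paper first notes that the scheme-theoretic stabilizer of a generic point is finite \'etale (its Lie algebra sits inside $\Lie(T)_x = 0$), and then kills the group of $k$-points by a dimension count: for each nontrivial $s$ in the component group one verifies $\dim s^T + \dim X^s < \dim X$, treating separately the cases where $s$ moves more than two of the $W_i$ and where $s$ is a transposition. You instead compute the stabilizer of an explicit point with algebraically independent homogeneous coordinates and derive a contradiction from a monomial identity among the $c_{ij}$; this is precisely the method the paper itself uses to prove Lemma~\ref{proj} (and which is licensed by the discussion following Lemma~\ref{point}), just not the method it uses for this particular lemma. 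Your bookkeeping checks out: the four distinct indices $i_1, i_2, j_1, j_2$ exist exactly when $n \ge 4$, and $c_{i_1,j_1}$ (resp.\ $c_{i_1,j_2}$) is indeed the unique factor on its side of the identity whose first index is $i_1$, so the two monomials cannot coincide. The one point you should make explicit is that generic freeness requires triviality of the stabilizer as a \emph{group scheme}, not merely of its $K$-points; this costs nothing here, since your $\sigma = 1$ computation, read with $t$ an $R$-point of $T$ for an arbitrary $K$-algebra $R$, shows the scheme-theoretic stabilizer in $T$ is trivial, whence $\Lie(N_G(T)_{x_0}) \subseteq \Lie(T)_{x_0} = 0$ and the full stabilizer is trivial. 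The trade-off: the paper's dimension count is shorter and coordinate-free, while your computation identifies the stabilizer explicitly and transfers more readily to settings where the fixed-point subvarieties are harder to estimate.
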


\begin{proof}
Each element of the maximal torus $T$ is the image of a diagonal matrix $t := \diag(t_1, \ldots, t_n)$ under the surjection $\GL_n \to \PGL_n$.  The kernel of the action of $T$ on $\P(W_i)$ are the elements such that $t_i t_j^{-1}$ are equal for all $j \ne i$.  Thus the kernel of the action on $X$ is the subgroup of elements with $t_i = t_j$ for all $i, j$, so $T$ acts faithfully on $X$.  For generic $x \in X$, 
the identity component of $N_G(T)_x$ is contained in $T_x$, so $\Lie(N_G(T)_x) \subseteq \Lie(T)_x = 0$, i.e., $N_G(T)_x$ is finite \'etale.

To show that the (concrete) group $S$ of $k$-points of $N_G(T)_x$ is trivial, it suffices to check $1 \ne s \in S$ that 
\begin{equation} \label{projs.1}
\dim s^T + \dim X^s < \dim X
\end{equation}
(compare, for example, \cite[10.2, 10.5]{GG:simple}). As $s \ne 1$, it permutes the $W_i$'s nontrivially.  If $s$ moves more than two of the $W_i$'s, then 
\[
\dim X - \dim X^s \ge 2 \dim \P(W_i) = 2(n-2).
\]
But of course $\dim s^T \le n - 1$, verifying \eqref{projs.1} for $n \ge 4$.

If $s$ interchanges only two of the $W_i$'s, i.e., it is a transposition, then $\dim X - \dim X^s = n-2$, but $\dim s^T = 1 < n - 2$, and again \eqref{projs.1} has been verified.
\end{proof}

\begin{eg} \label{A1.ed}
$\ed(\PGL_2) = 2$, regardless of $\car k$, so type $A_1$ is a genuine exception to Theorems \ref{big.O} and \ref{A} (as $\dim \PGL_2 - 2(\rk \PGL_2) - 1 = 0$).
Indeed, $H^1(k, G)$ classifies quaternion algebras over $k$, i.e., the subgroup $\Z/2 \times \mu_2$ of $\PGL_2$ gives a surjection in flat cohomology $H^1(k, \Z/2) \times H^1(k, \mu_2) \to H^1(k, \PGL_2)$, so $\ed(\PGL_2) \le 2$.  On the other hand, the connecting homomorphism $H^1(K, \PGL_2) \to H^2(K, \mu_2)$, which sends a quaternion algebra to its class in the 2-torsion of the Brauer group of $K$, is nonzero for some extension $K$, and therefore also $\ed(\PGL_2) \ge 2$.

Entirely parallel comments apply to $\PGL_3$, in which case the surjectivity $H^1(k, \Z/3) \times H^1(k, \mu_3) \to H^1(k, \PGL_3)$ is due to Wedderburn \cite[19.2]{KMRT}.  Thus $\ed(\PGL_3) = 2$ and $\PGL_3$ is a genuine exception to Theorem \ref{A}.
\end{eg}

The proof of Theorem \ref{A} requires a couple more lemmas.

\begin{lem} \label{GLSL}
Suppose $1 \to A \to B \to C \to 1$ is an exact sequence of group schemes over $k$.  If $H^1(K, C) = 0$ for every $K \supseteq k$, then $\ed(B) \le \ed(A) \le \ed(B) + \dim C$.
\end{lem}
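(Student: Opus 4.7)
The plan is to extract both inequalities from the long exact sequence in non-abelian fppf cohomology
\[
H^0(K,B) \to H^0(K,C) \xrightarrow{\delta} H^1(K,A) \to H^1(K,B) \to H^1(K,C)
\]
together with a standard twisting argument. The hypothesis that $H^1(K,C)=0$ for every extension $K/k$ makes the map $H^1(K,A) \to H^1(K,B)$ surjective for every such $K$.

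For $\ed(B) \le \ed(A)$, any $x \in H^1(K,B)$ has a preimage $y \in H^1(K,A)$ by surjectivity, and by the definition of $\ed(A)$ the class $y$ descends to some $y_0 \in H^1(K_0, A)$ with $k \subseteq K_0 \subseteq K$ and $\trdeg_k K_0 \le \ed(A)$. The image of $y_0$ in $H^1(K_0,B)$ is then a $K_0$-descent of $x$, so $\ed(x) \le \ed(A)$.

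For $\ed(A) \le \ed(B) + \dim C$, fix $y \in H^1(K,A)$ with image $x \in H^1(K,B)$. Choose $k \subseteq K_0 \subseteq K$ with $\trdeg_k K_0 \le \ed(B)$ and a class $x_0 \in H^1(K_0,B)$ mapping to $x$. Twist the given exact sequence over $K_0$ by a cocycle representing $x_0$ to obtain an exact sequence $1 \to {}^{x_0}A \to {}^{x_0}B \to {}^{x_0}C \to 1$ of $K_0$-group schemes; the induced bijection of pointed cohomology sets carries $x_0$ to the neutral class, so the twisted version $y'$ of $y$ lies in the kernel of $H^1(K,{}^{x_0}A) \to H^1(K,{}^{x_0}B)$. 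By exactness, $y' = \delta(c)$ for some $c \in {}^{x_0}C(K)$.

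Finally, ${}^{x_0}C$ is a $K_0$-scheme of finite type of dimension $\dim C$, so the $K$-point $c$ factors through its residue field $K_1$ on ${}^{x_0}C$, which satisfies $K_0 \subseteq K_1 \subseteq K$ and $\trdeg_{K_0} K_1 \le \dim C$. Then $\delta(c)$, and hence after untwisting also $y$, descends to $K_1$, giving $\ed(y) \le \trdeg_k K_1 \le \ed(B) + \dim C$. The only substantive subtlety is the twisting step: when $A$ is non-abelian the fiber of $H^1(K,A) \to H^1(K,B)$ over $x$ is not described by $C(K)$ itself but by a quotient of ${}^{x_0}C(K)$, so the descent through $x_0$ must happen before invoking the connecting map. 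Once this is in place, the remainder is routine transcendence-degree bookkeeping.
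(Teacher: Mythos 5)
Your first inequality and the overall strategy for the second --- descend the image of $y$ in $H^1(K,B)$ to some $x_0$ over a field $K_0$ with $\trdeg_k K_0 \le \ed(B)$, express the remaining discrepancy by a $K$-point of a scheme of dimension $\dim C$, and note that such a point is defined over a field of transcendence degree at most $\dim C$ over $K_0$ --- coincide with the paper's proof, and the transcendence-degree bookkeeping at the end is correct.

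The one step that does not go through as written is the twist. Twisting the sequence by a cocycle representing $x_0 \in H^1(K_0,B)$, i.e.\ by a cocycle with values in $B$, yields compatible bijections $H^1(K,{}^{x_0}B) \to H^1(K,B)$ and $H^1(K,{}^{x_0}C)\to H^1(K,C)$, but it does \emph{not} induce a bijection $H^1(K,{}^{x_0}A)\to H^1(K,A)$: the form ${}^{x_0}A$ is obtained via the conjugation action of $B$ on $A$, and its $H^1$ is in general a different pointed set. Hence ``the twisted version $y'$ of $y$'' is undefined, and the connecting map for the twisted sequence has nothing to receive. The repair is small: since $H^1(K_0,C)=0$, the map $H^1(K_0,A)\to H^1(K_0,B)$ is already surjective, so you may first lift $x_0$ to some $y_0\in H^1(K_0,A)$ and twist by a cocycle with values in $A$ representing $y_0$; the standard formalism then identifies the fiber of $H^1(K,A)\to H^1(K,B)$ over $x$ with the kernel of $H^1(K,{}^{y_0}A)\to H^1(K,{}^{y_0}B)$, i.e.\ with the image of $\delta$ on $C(K)$ (note that twisting $C$ by an $A$-valued cocycle leaves $C$ unchanged), after which your descent of $c$ to $K_1$ works verbatim. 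The paper sidesteps twisting entirely by using the exactness statement in the form ``the fibers of $H^1(K,A)\to H^1(K,B)$ are the orbits of the natural action of $C(K)$ on $H^1(K,A)$,'' writing $\alpha = c\cdot \res_{K/K_0}(\alpha_0)$ and descending $c$; the two routes are equivalent, but the untwisted formulation avoids the pitfall above.
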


\begin{proof}
For every $K$, the sequence
\begin{equation} \label{GLSL.1}
C(K) \to H^1(K, A) \to H^1(K, B) \to 1
\end{equation}
is exact.  From here the argument is standard.
The surjectivity of the middle arrow gives the first inequality.  For the second, take $\alpha \in H^1(K, A)$.  There is a field $K_0$ lying between $k$ and $K$ such that $\trdeg_k K_0 \le \ed(B)$ and an element $\alpha_0 \in H^1(K_0, A)$ whose image in $H^1(K, B)$ agrees with that of $\alpha$.  Thus, there is a $c \in C(K)$ such that $c \cdot \res_{K/K_0}(\alpha_0) = \alpha$.  There is a field $K_1$ lying between $k$ and $K$ such that $\trdeg_k K_1 \le \dim C$ such that $c$ belongs to $C(K_1) \subseteq C(K)$.  In summary, 
\[
\ed(\alpha) \le \trdeg_k (K_1 K_0) \le \trdeg_k K_1 + \trdeg_k K_0 \le \ed(B) + \dim C.
\]
As this holds for every $K$ and every $\alpha \in H^1(K, A)$, the conclusion follows.
\end{proof}

Lemma \ref{GLSL} applies, for example, when $B$ is an extension of a group $A$ by a quasi-trivial torus $C$, such as when $A = \SL_n / \mu_m$ and $B = \GL_n / \mu_m$.  In that case, one can tease out whether $\ed(\SL_n / \mu_m) = \ed(\GL_n/\mu_m)$ or $\ed(\GL_n/\mu_m) + 1$ by arguing as in \cite{ChM:edpA}.

\begin{lem} \label{SL.xfer}
Suppose $m$ divides $n \ge 2$.  Then
\[
\ed(\GL_n / \mu_m) \le \ed(\PGL_n) + n/m - 1.
\]
\end{lem}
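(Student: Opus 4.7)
The plan is to take a class $\alpha \in H^1(K, \GL_n/\mu_m)$ and descend it in two stages: first through the underlying $\PGL_n$-torsor structure, and then via a Severi--Brauer variety to kill a residual Brauer-theoretic obstruction.

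The central extension $1 \to \Gm/\mu_m \to \GL_n/\mu_m \to \PGL_n \to 1$ has kernel isomorphic to $\Gm$, and combining this with $H^1(K, \Gm) = 0$ shows that $H^1(K, \GL_n/\mu_m) \to H^1(K, \PGL_n)$ is injective, with image the classes of central simple $K$-algebras $A$ of degree $n$ satisfying $m \cdot [A] = 0$ in $\mathrm{Br}(K)$. I would first descend $[A]$ as a $\PGL_n$-torsor to a class $[A_0] \in H^1(K_0, \PGL_n)$ over a subfield $K_0 \subseteq K$ with $\trdeg_k K_0 \le \ed(\PGL_n)$, yielding a central simple $K_0$-algebra $A_0$ of degree $n$ with $A_0 \otimes_{K_0} K \cong A$.

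Because $A_0$ may not satisfy the exponent constraint over $K_0$, the obstruction class $m \cdot [A_0] \in \mathrm{Br}(K_0)$ can be nonzero, although it certainly vanishes over $K$. I would kill this obstruction inside $K$ by letting $B$ be a central simple $K_0$-algebra of degree $n/m$ representing $m \cdot [A_0]$ and considering the Severi--Brauer variety $X := \mathrm{SB}(B)$, of dimension $n/m - 1$. Since $B$ splits over $K$, the variety $X$ has a $K$-point; the corresponding $K_0$-morphism $\mathrm{Spec}\, K \to X$ factors through a point $x$ whose residue field $k(x) \subseteq K$ satisfies $\trdeg_{K_0} k(x) \le n/m - 1$ and splits $B$. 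Setting $K_1 := k(x)$, the algebra $A_0 \otimes_{K_0} K_1$ has degree $n$ and exponent dividing $m$, so by the injectivity established above it determines a unique class in $H^1(K_1, \GL_n/\mu_m)$ whose base change to $K$ is $\alpha$, and $\trdeg_k K_1 \le \ed(\PGL_n) + n/m - 1$.

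The main obstacle I anticipate is the existence of the degree-$(n/m)$ representative $B$ of $m \cdot [A_0]$: equivalently, the Brauer-theoretic bound $\mathrm{ind}(m \cdot [A_0]) \le n/m$ for an arbitrary central simple algebra $A_0$ of degree $n$. For cyclic $A_0$ this is a direct computation using $\mathrm{ind}(m \cdot [A_0]) = \mathrm{ind}(A_0)/\gcd(\mathrm{ind}(A_0), m)$ combined with the fact that $\mathrm{lcm}(\mathrm{ind}(A_0), m)$ divides $n$; in the general case I would expect to invoke either a standard $\lambda$-operation on Brauer classes or the primary decomposition of $[A_0]$ to produce the required degree-$(n/m)$ representative, after which the Severi--Brauer descent above goes through routinely.
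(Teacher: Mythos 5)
Your argument is correct and is essentially the proof that the paper omits by reference to \cite[Lemma 7.1]{BaekM}: first descend the underlying $\PGL_n$-torsor to $K_0$, then kill the residual Brauer class $m[A_0]$ by passing to the residue field of a point of a Severi--Brauer variety of dimension $n/m-1$. The index bound you flag as the main obstacle is indeed the only nontrivial input, and it is a true and standard fact, provable exactly as you anticipate: primary decomposition reduces to a $p$-primary class $\beta$ of index $p^c$ multiplied by $p^b$ (the prime-to-$p$ part of $m$ does not change the index), and the $\lambda$-power $\lambda^{p^b}$ of the underlying division algebra represents $p^b\beta$ in degree $\binom{p^c}{p^b}$, whose $p$-adic valuation is $c-b$ by Kummer's theorem, so $\mathrm{ind}(m[A_0])$ divides $\mathrm{ind}(A_0)/\gcd(\mathrm{ind}(A_0),m)$, which divides $n/m$ because $\mathrm{lcm}(\mathrm{ind}(A_0),m)$ divides $n$. (The cruder bound $\gcd\bigl(\binom{n}{m},n\bigr)$ would not suffice in general, e.g.\ for $n=12$, $m=6$, so the reduction to primary components is genuinely needed.)
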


We omit the proof, which is the same as that for \cite[Lemma 7.1]{BaekM} apart from cosmetic details.

\begin{proof}[Proof of Theorem \ref{A}]
In view of Lemmas \ref{projs} and \ref{ed.proj}, we find that 
\[
\ed(\PGL_n) \le \ed(N_G(T)) \le \dim X - \dim N_G(T) = n^2 - 3n + 1.
\]
Therefore Lemma \ref{SL.xfer} gives
\begin{equation} \label{GLSL.bound}
\ed(\GL_n / \mu_m) \le n^2 - 3n + n/m
\end{equation}
and Lemma \ref{GLSL} gives the required bound on $\ed(\SL_n / \mu_m)$.
\end{proof}

\begin{lem} \label{coprime}
Suppose $m$ divides $n$, and write $n = n'q$ where $n'$ and $m$ have the same prime factors and $\gcd(n',q) = 1$.  Then $H^1(K, \GL_n / \mu_m) = H^1(K, \GL_{n'} / \mu_m)$ for every extension $K$ of $k$ and 
$\ed(\GL_n / \mu_m) = \ed(\GL_{n'}/\mu_m)$.
\end{lem}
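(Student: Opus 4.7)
The plan is to identify both cohomology sets naturally with the same subset of $\mathrm{Br}(K)[m]$, after which the equality of essential dimensions will be immediate because $\ed$ depends only on the functor $K \mapsto H^1(K, G)$.

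First, for any $N$ divisible by $m$, I would apply the central extension
\[
1 \to \mu_m \to \GL_N \to \GL_N/\mu_m \to 1
\]
together with Hilbert 90 ($H^1(K, \GL_N) = 1$) to obtain a boundary map that is an injection
\[
\partial_N \!: H^1(K, \GL_N/\mu_m) \hookrightarrow H^2(K, \mu_m) = \mathrm{Br}(K)[m]
\]
with image $\{[A] \in \mathrm{Br}(K)[m] : \mathrm{ind}([A]) \mid N\}$; compare the description used in \cite{BaekM} of this torsor set in terms of CSAs of degree $N$ and exponent dividing $m$, with the extra structure trivial over a field. The block-diagonal inclusion $\GL_{n'} \hookrightarrow \GL_n$ sending $A \mapsto \diag(A, A, \dots, A)$ ($q$ copies) preserves $\mu_m$, so it descends to a morphism $\GL_{n'}/\mu_m \to \GL_n/\mu_m$, and by naturality the induced map on $H^1$ is compatible with $\partial_{n'}$ and $\partial_n$.

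The core task is then to check that the images of $\partial_{n'}$ and $\partial_n$ coincide inside $\mathrm{Br}(K)[m]$. One containment is trivial since $n' \mid n$. For the other, suppose $[A] \in \mathrm{Br}(K)[m]$ has $\mathrm{ind}([A]) \mid n = n'q$. The classical fact that the prime divisors of the index of a central simple algebra coincide with those of its period (primary decomposition of Brauer classes) implies that every prime dividing $\mathrm{ind}([A])$ divides $\mathrm{per}([A])$, which divides $m$, and hence divides $n'$ by the hypothesis that $n'$ and $m$ have the same prime factors. Since $\gcd(n', q) = 1$, this forces $\gcd(\mathrm{ind}([A]), q) = 1$; combined with $\mathrm{ind}([A]) \mid n'q$, we conclude $\mathrm{ind}([A]) \mid n'$, as required.

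Combining the two steps yields a natural bijection of functors $H^1(-, \GL_{n'}/\mu_m) \xrightarrow{\sim} H^1(-, \GL_n/\mu_m)$, and the equality $\ed(\GL_n/\mu_m) = \ed(\GL_{n'}/\mu_m)$ follows at once. The argument is essentially a direct computation: the only nontrivial input is the period--index prime-factor agreement for CSAs, which is classical, and no substantive obstacle is anticipated.
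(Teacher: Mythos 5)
Your argument is correct and takes essentially the same route as the paper: both identify the torsor sets with central simple algebras of the given degree and exponent dividing $m$, and both reduce the degree from $n$ to $n'$ via the classical coprime (primary) decomposition of such algebras. The paper phrases the key step as writing $A \cong A' \otimes B$ with $\deg B = q$ and observing that $B$ is split, which carries the same content as your period--index prime-factor argument.
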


\begin{proof}
The set $H^1(K, \GL_n / \mu_m)$ is in bijection with the isomorphism classes of central simple $K$-algebras $A$ of degree $n$ and exponent dividing $m$.  As $n'$ and $q$ are coprime, every such algebra can be written uniquely as $A' \ot B$ where $A'$ has degree $n'$ and $B$ has degree $q$ \cite[4.5.16]{GilleSz}.  However, $B$ is split as its exponent must divide $\gcd(q, \exp A)$, i.e., $A \cong M_q(A')$.  That is, $H^1(K, \GL_n / \mu_m) = H^1(K, \GL_{n'}/\mu_m)$.  As this holds for every extension $K$ of $k$, the claim on essential dimension follows.
\end{proof}

\begin{rmk}
One can eliminate $m$ from the bound appearing in Theorem \ref{A} to obtain
\[
\ed(\SL_n / \mu_m) \le n^2 - 3n + 1 + n/4 \quad \text{for $m$ dividing $n \ge 4$.}
\]
To check this, assume $m < 4$.  If $m = 1$, $\ed(\SL_n) = 0$. If $m = 2$, then Theorem \ref{PSp} gives a stronger bound.  

If $m = 3$, then write $n = n'q$ for $n' = 3^a$ for some $a \ge 1$ as in Lemma \ref{coprime}.  If $a = 1$, then $n \ge 6$ and $\ed(\GL_n / \mu_3) = \ed(\PGL_3) = 2$ by Lemma \ref{coprime}, which is less than $n^2 - 3n + n/4$.  If $a > 1$, then $\ed(\GL_n/\mu_3) \le \ed(\PGL_{n'}) + n'/3 - 1$; as $n'$ is odd and $\ge 9$, \cite{LRRS} gives $\ed(\PGL_{n'}) \le \frac12 (n'-1)(n'-2)$, whence the claim.
\end{rmk}

\begin{rmk}
Here is another way to obtain an upper bound on $\ed(\SL_n / \mu_m)$; it is amusing because it requires $\car k = p$ to be nonzero.  Fix an integer $e \ge 1$ and $\e = \pm 1$, and set $m := \gcd(p^e + \e, n)$.  We will show that
\begin{equation} \label{Frob.bd}
\ed(\SL_n / \mu_m) \le n^2 - n + 1.
\end{equation}
To see this, consider the $\GL_n$-module $V := W \ot W^{[e]}$ or $W^* \ot W^{[e]}$, where $W$ is the natural module $k^n$, $[e]$ denotes the $e$-th Frobenius twist, and where we take the first option if $\e = +1$ and the second option if $\e = -1$.  A scalar matrix $x \in \GL_n$ acts on $V$ as $x^{p^e + \e}$, and therefore the action of $\SL_n$ on $V$ gives a faithful representation of $G := \SL_n / \mu_m$.  We consider the action of $N_G(T)$ on $V$ for $T$ a maximal torus in $G$, and apply Lemma \ref{proj} to see that $N_G(T)$ acts generically freely on $V$ and so obtain \eqref{Frob.bd}.
\end{rmk}

\section{Minuscule representations of $E_6^\sc$ and $E_7^\sc$: proof of Theorem \ref{ed.thm}}

Recall that $E_6^\sc$ and $E_7^\sc$ have minuscule representations, i.e., representations where all weights are nonzero and occur with multiplicity 1 and make up a single orbit $\Omega$ under the Weyl group.  For $E_6$ there are two inequivalent choices, both of dimension 27, and for $E_7$ there is a unique one of dimension 56.

\begin{prop} \label{minu}
Let $T$ be a maximal torus in a simply connected group $G$ of type $E_6^\sc$ or $E_7^\sc$ over an algebraically closed field $k$.  Then $N_G(T)$ acts generically freely on $V$ for every minuscule representation $V$ of $G$.
\end{prop}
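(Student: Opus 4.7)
The plan is to invoke Lemma \ref{proj} with its ambient group taken to be $N_G(T)$, whose identity component is the torus $T$ and whose component group is the Weyl group $W$. The lemma's three hypotheses are that every weight of $V$ has multiplicity one, that $T$ acts faithfully on $V$, and that $W$ acts faithfully on the kernel of $\psi \colon \oplus_{\omega \in \Omega} \Z \to T^*$. Multiplicity one is built into the definition of a minuscule representation. For faithfulness of $T$ on $V$, since $G$ is simply connected we have $T^* = P$; and using that $\lambda - s_\alpha \lambda$ is a simple root for the distinguished simple root $\alpha$ (together with $W$-transitivity on roots, which all have the same length in $E_6$ and $E_7$), the differences of elements of $\Omega = W\lambda$ span $Q$, so $\Omega$ spans $\Z\lambda + Q = P$ since $\lambda$ generates $P/Q$.

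The main work is to verify faithfulness of $W$ on $\ker \psi$. The kernel $N$ of this action is a normal subgroup of $W$, and both Weyl groups admit a clean normal-subgroup classification: $W(E_6)$ is an extension of the simple group $\mathrm{U}_4(\F_2)$ by $\Z/2$ with no other nontrivial proper normal subgroups, while $W(E_7) \cong \{\pm 1\} \times \mathrm{Sp}_6(\F_2)$ has only the four normal subgroups built from its direct factors. In the $E_7$ case, $-1 \in W$ acts as $-1$ on $T^*$ and $|\Omega|/2 = 28 > 7 = \dim T$, so Example \ref{proj.m1} immediately gives that $-1 \notin N$; hence for both groups it remains to exclude $N$ equal to the rotation subgroup $W^+$.

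For that step I would use a transitivity argument: the stabilizer in $W$ of a minuscule weight (namely $W(D_5)$ for $E_6$ and $W(E_6)$ for $E_7$) is generated by simple reflections of $W$, each of which has determinant $-1$ on $T^*$, so the stabilizer is not contained in $W^+$. Consequently $W^+$ still acts transitively on $\Omega$, so $\mathbb{Q}[\Omega]^{W^+}$ is one-dimensional. Since $\dim_\mathbb{Q}(\ker \psi \otimes \mathbb{Q}) = |\Omega| - \dim T$ equals $21$ for $E_6$ and $49$ for $E_7$, an action of $W^+$ that fixed $\ker \psi$ pointwise would force $\ker \psi \otimes \mathbb{Q}$ to sit inside a one-dimensional subspace, a contradiction. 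This forces $N = 1$.

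The main obstacle is the bookkeeping around identifying the normal subgroups of $W(E_6)$ and $W(E_7)$ and confirming that the rotation subgroup still acts transitively on the minuscule orbit; once these facts are in hand, the proposition is a direct appeal to Lemma \ref{proj}.
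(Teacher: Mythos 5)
Your proposal is correct, and its skeleton matches the paper's: both invoke Lemma \ref{proj}, note multiplicity one and faithfulness of $T$, and reduce the remaining hypothesis to showing that no nontrivial normal subgroup of $W$ acts trivially on $\ker\psi$, using the known normal-subgroup structure of $W(E_6)$ and $W(E_7)$ (and, for $E_7$, Example \ref{proj.m1} to dispose of $\qform{-1}$). Where you diverge is in the final verification. The paper exhibits an explicit six-element subset $X \subset \Omega$ with $\sum \chi_i = 0$ and shows $HX \ne X$ for each minimal normal subgroup $H$ not containing $-1$, by combining ``$H$ has no orbit of size $2,\dots,6$'' (maximal subgroups of $H$ have index $27$, resp.\ $28$) with ``$H$ has no fixed vectors.'' You instead avoid producing any explicit relation: you observe that the stabilizer of a minuscule weight is a parabolic subgroup containing reflections, so the rotation subgroup $W^+$ is still transitive on $\Omega$, whence the fixed subspace of $W^+$ in the permutation module $\mathbb{Q}^{\Omega}$ is one-dimensional, while $\ker\psi \otimes \mathbb{Q}$ has dimension $|\Omega| - \rk G = 21$ or $49$. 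This dimension count is arguably cleaner and uses only standard facts (parabolic stabilizers of dominant weights, determinants of reflections) in place of the paper's data on maximal subgroup indices and the explicit weight listing; the paper's method, on the other hand, produces a concrete element of $\ker\psi$ witnessing non-triviality, which is in the same spirit as its arguments elsewhere (e.g.\ Proposition \ref{short}). Both are complete proofs.
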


\begin{proof}
We apply Lemma \ref{proj}.  The map $G \to \GL(V)$ is injective, so $T$ acts faithfully on $V$.  It suffices to verify \ref{proj}\eqref{proj.ker}.

One can list explicitly the weights $\Omega$ of $V$ and find $X = \{ \chi_1, \ldots, \chi_6 \} \subset \Omega$ with $\sum \chi_i = 0$ and $\chi_i \ne \pm \chi_j$ for $i \ne j$.  It suffices, therefore, to check for every minimal normal subgroup $H$ of the Weyl group not containing $-1$, that $HX \ne X$.  For this, it is enough to observe that $H$ has no fixed lines on the vector space $\C[\Phi]$ generated by the roots $\Phi$ (because $\Z[\Omega] = \Z[\Phi]$, so $H$ fixes no element of $\Omega$) and that $H$ has no orbits of size $2, 3, \ldots, 6$ (because its maximal subgroups have index greater than 6).  

For $E_6$, $H$ has order 25920 with largest maximal subgroups of index 27.  For $E_7$, $H$ is isomorphic to $\Sp_6(\F_2)$ with largest maximal subgroups of index 28.  The description of these Weyl groups from \cite[Ch.~IV, \S4, Exercises 2 and 3]{Bou:g4} make it obvious that $H$ does not preserve any line in $\C[\Phi]$.
\end{proof}

\begin{proof}[Proof of Theorem \ref{ed.thm}]
The group $F_4$ has 24 short roots, so by Proposition \ref{short}, we have
\[
\ed(F_4) \le \ed(N_G(T)) \le 24 - 4 - 1 = 19.
\]
For $E_7^\sc$, we apply instead Proposition \ref{minu} to obtain the desired upper bound.

The group $E_6^\sc$ has a subgroup $F_4 \times \mu_3$ such that the 
map in 
cohomology $H^1(K, F_4 \times \mu_3) \to H^1(K, E_6^\sc)$ is surjective for every extension $K \supseteq k$, see \cite[9.12]{G:lens}, hence $\ed(E_6^\sc) \le \ed(F_4) + 1$.
\end{proof}

\section{Proof of Theorem \ref{big.O}} \label{A.sec}

\begin{proof}[Proof of Theorem \ref{big.O}]
Suppose first that $G$ has type $A_{n-1}$, i.e., $G \cong \SL_n / \mu_m$.  Assume $m > 1$ for otherwise $\ed(G) = 0$.  It is claimed that $\ed(G) \le n^2 - 2n$.  As $\ed(\PGL_3) = 2$, we may assume $n \ge 4$.  Combining Theorem \ref{A} with the fact that $1 + n/m \le n$ gives the claim.

Now suppose that $G$ is adjoint.  If $G$ is one of the types covered by Proposition \ref{short}, then  we are done by combining that proposition with the inequality $\ed(G) \le \ed(N_G(T))$. Type $B$ was already addressed in the introduction.  For type $G_2$, the essential dimension is 3 because $H^1(K, G_2)$ is in bijection with the set of 3-Pfister quadratic forms over $K$ for every field $K$ containing $k$ \cite[26.19]{KMRT}.

Now suppose that $G$ is neither type $A$ nor adjoint.  If $G$ has type $B$, then $G$ is a spin group, so there is nothing to prove.  If $G$ has type $C$,  then $G=\Sp_{2n}$ and $\ed(G)=0$.  If $G$ has type $D$, then the only remaining case to consider is $G=\SO_{2n}$ for $n \ge 4$ and then 
$\ed(G) \le 2n - 1< 2n^2 - 3n - 1 = \dim G - 2(\rk G) - 1$.  The two remaining cases are the simply connected groups of type $E_6$ and $E_7$ for which we refer to Theorem  \ref{ed.thm}.
\end{proof}

\section{Generic stabilizer for the adjoint action} \label{adj.sec}

As a complement to the above results, we now calculate the stabilizer in a simple algebraic group $G$ of a generic element in $\Lie(\Ad(G))$.   (Note that, in case $G = \SL_2$, we are discussing the action on $\Lie(\PGL_2)$, not on $\Lie(\SL_2)$, and the two Lie algebras are distinct if $\car k = 2$.)
We include this calculation here because the methods are similar to the previous results.  The results are complementary, in the sense that previously we considered $N_G(T)$ acting on representations with no zero weights, and in this section we consider $N_G(T)$ acting on $\Lie(\Ad(T))$, for which zero is the only weight.  The main result, Proposition \ref{adjoint}, is used in \cite{GG:spin}.

After a preliminary result, we will calculate the stabilizer of a generic element of the adjoint representation.
Let $\Phi$ be an irreducible root system and put $W$ for its Weyl group and $Q$ for its root lattice.  
For each prime $p$, tensoring $Q$ with the finite field $\F_p$ gives a homomorphism
\[
\rho_p \!: \qform{W, -1} \to \GL_{\rk Q}(\F_p).
\]

\begin{lem}  \label{weyl}
The kernel of $\rho_p$ is $(\Z/2)^n$ if $\Phi$ has type $B_n$ for some $n \ge 2$ and $p = 2$.  Otherwise, 
$\ker \rho_p = \qform{-1}$ if $p = 2$ and $\ker \rho_p = 1$ for $p \ne 2$. 
\end{lem}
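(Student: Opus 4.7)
The plan is to separate the cases $p$ odd and $p = 2$, using two standard torsion properties of the principal congruence subgroup of $\GL_n(\Z)$ to cut down to a finite check. The key facts are: (a) for $p$ odd, any finite-order $g \in \GL_n(\Z)$ with $g \equiv I \pmod p$ equals $I$; (b) for $p = 2$, any finite-order such $g$ satisfies $g^2 = I$. Both are proved by writing $g = I + p^a M$ with $p \nmid M$, expanding $(I + p^a M)^p$ binomially, and observing that the $p$-adic valuation of $g^p - I$ is exactly $a+1$, with leading matrix $\equiv M \pmod p$, provided $p$ is odd or $p = 2$ and $a \ge 2$; iteration forces $g$ to have infinite order unless $M = 0$. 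For $p = 2$ one obtains (b) by applying the $a \ge 2$ version to $g^2$, which is automatically $\equiv I \pmod 4$. Since every element of $\langle W, -1\rangle \subset \GL_n(\Z)$ has finite order, (a) gives $\ker \rho_p = 1$ for all odd $p$.

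For $p = 2$, since $v \equiv -v \pmod{2Q}$ we have $-1 \in \ker \rho_2$ always, and by (b) any other kernel element is an involution. For type $B_n$ ($n \ge 2$), each short root $\alpha = e_i$ has coroot $\alpha^\vee = 2 e_i$, so $s_{e_i}(x) = x - 2 x_i e_i \equiv x \pmod{2 \Z^n}$; the $n$ such reflections generate the sign-change subgroup $(\Z/2)^n \trianglelefteq W(B_n)$, which therefore lies in $\ker \rho_2$. Conversely, write an arbitrary $w \in W(B_n)$ as $w = \tau \sigma$ with $\tau$ a sign change and $\sigma \in S_n$; if $\sigma(i) \ne i$, then $w(e_i) = \pm e_{\sigma(i)} \not\equiv e_i \pmod{2 \Z^n}$, forcing $\sigma = 1$. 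Hence $\ker \rho_2 = (\Z/2)^n$ exactly.

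For the remaining irreducible types at $p = 2$, the claim reduces to verifying that no involution $w \in \langle W, -1\rangle \setminus \{\pm 1\}$ acts trivially on $Q/2Q$. Any involution can be written $w = \pm\prod_{i=1}^k s_{\alpha_i}$ for pairwise orthogonal roots $\alpha_1,\dots,\alpha_k$, and, using $s_{\alpha}(x) = x - \langle x, \alpha^\vee \rangle \alpha$, the condition $w \equiv \pm I \pmod{2Q}$ becomes $\sum_i \langle x, \alpha_i^\vee\rangle \alpha_i \in 2Q$ for every $x \in Q$. For simply laced types ($A_{n-1}$ with $n \ge 3$, $D_n$, $E_6$, $E_7$, $E_8$) we have $\alpha_i^\vee = \alpha_i$, and whenever the $\alpha_i$ do not span $Q \otimes \mathbb{Q}$ one produces a root $\beta$ non-orthogonal to some $\alpha_j$ so that $\sum_i \langle \beta, \alpha_i\rangle \alpha_i$ has an odd coefficient on $\alpha_j$ and visibly escapes $2Q$; when the $\alpha_i$ do span, $\prod s_{\alpha_i} = -I$, giving $w = \pm 1$. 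Types $C_n$, $F_4$, and $G_2$ are handled analogously with brief adjustments for short/long root lengths. The main (though routine) obstacle is this final type-by-type enumeration.
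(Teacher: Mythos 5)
Your handling of odd $p$, the reduction of $\ker\rho_2$ to an elementary abelian $2$-group containing $-1$, and the explicit computation for type $B_n$ are all correct, and on those points you run parallel to the paper (which invokes the same Minkowski--Serre torsion facts and the same short-root computation for $B_n$). Where you diverge is the treatment of the remaining types, and that is where there is a genuine gap. The inference ``$\sum_i\langle\beta,\alpha_i^\vee\rangle\alpha_i$ has an odd coefficient on some $\alpha_j$, hence visibly escapes $2Q$'' is false, because pairwise orthogonal roots need not be linearly independent modulo $2Q$. Concretely, in $D_5$ take $\alpha_1=\e_1-\e_2$, $\alpha_2=\e_1+\e_2$, $\alpha_3=\e_3-\e_4$, $\alpha_4=\e_3+\e_4$, so that $w=\prod s_{\alpha_i}\ne\pm1$; for $\beta=\e_2-\e_3$ one gets $-\alpha_1+\alpha_2+\alpha_3+\alpha_4=2(\e_2+\e_3)\in 2Q(D_5)$ even though every coefficient is odd. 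A witness does exist ($\beta=\e_4-\e_5$ gives $2\e_4\notin 2Q(D_5)$), but your parity criterion neither produces it nor certifies it: whether $2\e_4$ lies in $2Q$ depends on the lattice, not on coefficient parity. Likewise, ``$C_n$, $F_4$, $G_2$ are handled analogously with brief adjustments'' conceals the heart of the matter: the long-root reflections of $C_n$ satisfy $s_{2\e_i}(x)-x=-2x_i\e_i$, which lies in $2\Z^n$ but not in $2Q(C_n)$ when $x_i$ is odd, and this lattice-level distinction is exactly why $C_n$ falls into the $\qform{-1}$ case while $B_n$ does not. Since you yourself defer the type-by-type enumeration, what remains is an outline whose key reduction step is unsound as stated.

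For comparison, the paper sidesteps the classification of involutions entirely: it uses that $\ker\rho_2$ is a normal $2$-subgroup of $\qform{W,-1}$, disposes of types $A_{n}$ ($n\ne 3$), $G_2$, and $E_n$ by citing that the only normal $2$-subgroup there is $\qform{-1}$, checks $A_3=D_3$ and $C_3$ directly, and propagates to all $D_n$ and $C_n$ via the embedding of the smaller root lattice as a direct summand. If you want to salvage your route, you must either show that the orthogonal roots underlying a putative non-central kernel element are independent modulo $2Q$ (false in general, as the $D_5$ example shows) or replace the parity test by a direct verification that some root $\beta$ has $w(\beta)-\beta\notin 2Q$, carried out separately in each type with the correct root lattice.
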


\begin{proof}   If $p \ne 2$, $\ker \rho_p =1$ by an old theorem of Minkowski (see \cite{Mi87} and also \cite[Lemma 1.1]{Se07}).   So
we may assume that $p=2$.  it also follows by a similar argument that $\ker \rho_2$ is a $2$-group \cite[Lemma 1.1']{Se07}.
 Clearly $-1 \in \ker \rho_2$.
Thus, the result follows immediately for $G$ of type $A_n$ for $n \ne 3$,  $G_2$, or $E_n$, since the only normal $2$-subgroups in
these cases are the subgroup of order $2$ containing $-1$.     It is straightforward to check the result for the groups $A_3=D_3$
and $C_3$.   Note that the root lattice of $D_{n-1}$  is a direct summand of $D_n, n > 3$ and any normal $2$-subgroup of the Weyl group
of $D_n$ of order greater than $2$ intersects the Weyl group of of $D_{n-1}$ in a subgroup of order greater than $2$.
Thus, the result for $D_3$ implies the result for all $D_n$.    Similarly, the result for $C_3$ implies the result for $C_n, n > 3$.  

Finally, suppose $\Phi$ has type $B_n$ for some $n \ge 2$ and $p = 2$.  Viewing $\Z^n$ as having basis $\e_i$ for $1 \le i \le n$, we can embed $\Phi$ in $\Z^n$ by setting the simple roots to be $\alpha_i = \e_i - \e_{i+1}$ for $1 \le i < n$ and $\alpha_n = \e_n$ as in \cite{Bou:g4}.  The Weyl group $W$ is isomorphic to $(\Z/2)^n \rtimes S_n$, where $(\Z/2)^n$ consists of all possible sign flips of the $\e_i$ and $S_n$ acts by permuting the $\e_i$.  The subgroup $(\Z/2)^n$ obviously acts trivially on $Q \ot \F_2$ (since
there is a basis of eigenvectors for $Q$ for this subgroup of exponent $2$).  In fact, $(\Z/2)^n$ is precisely the kernel of the action of $W$ on $Q \ot \F_2$, 
as is easy to check for $n < 5$ and is clear for $n \ge 5$.
\end{proof}

\begin{prop} \label{adjoint}
Let $G$ be a simple algebraic group.  The action of $G$ on $\Lie(\Ad(G))$ has stabilizer in general position $S$, with identity component $S^\circ$ a maximal torus in $G$.  Moreover, $S = S^\circ$ unless $\car k = 2$ and:
\begin{enumerate}
\item $G$ has type $B_n$ for $n \ge 2$; in this case $S/S^\circ \cong (\Z/2)^n$.
\item $G$ has type $A_1$, $C_n$ for $n \ge 3$, $D_n$ for $n \ge 4$ even, $E_7$, $E_8$, $F_4$, or $G_2$; in this case $S/S^\circ \cong \Z/2$ and the nontrivial element acts on $S^\circ$ by inversion.
\end{enumerate}
\end{prop}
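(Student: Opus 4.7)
The plan is to reduce to the action of the Weyl group $W := N_G(T)/T$ on $\Lie(\Ad(T))$ and then invoke Lemma \ref{weyl}. Every generic element of $\Lie(\Ad(G))$ is semisimple regular, hence $\Ad(G)$-conjugate to an element of $\Lie(\Ad(T))$, so I fix a generic $t \in \Lie(\Ad(T))$. Since $\Ad(T)$ is abelian it acts trivially on its own Lie algebra, so $T \subseteq G_t$. To verify that $\Lie(G_t) = \Lie(T)$ in every characteristic, I would use the root space decomposition $\Lie(\Ad(G)) = \Lie(\Ad(T)) \oplus \bigoplus_{\alpha \in \Phi} \g_\alpha$, which is respected by the map $d\pi \colon \Lie(G) \to \Lie(\Ad(G))$ induced from $\pi \colon G \to \Ad(G)$: for $t$ avoiding the finitely many hyperplanes $\ker \alpha$, $\mathrm{ad}(t)$ acts invertibly on each $\g_\alpha$, so $\ker \mathrm{ad}(t) = \Lie(\Ad(T))$ and its preimage under $d\pi$ in $\Lie(G)$ is exactly $\Lie(T)$.

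Consequently $G_t^\circ = T$, confirming that $S^\circ$ is a maximal torus. Any $g \in G_t$ normalizes $G_t^\circ = T$, so $G_t \subseteq N_G(T)$, and the component group $S/S^\circ$ equals $W_t := \{w \in W : wt = t\}$. For each $w \in W$ acting nontrivially on $\Lie(\Ad(T))$, the fixed subspace is proper, and the union of these over the finite group $W$ is a proper closed subset; so for generic $t$,
\[
W_t = \ker\bigl(W \to \GL(\Lie(\Ad(T)))\bigr).
\]
Since $\Lie(\Ad(T))$ is $k$-dual to $X^*(\Ad(T)) \otimes_\Z k = Q \otimes_\Z k$ where $Q$ is the root lattice, this agrees with the kernel of $W$ acting on $Q \otimes_\Z k$.

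I would then read off the answer from Lemma \ref{weyl} applied with $p = \car k$. If $p \ne 2$, the lemma gives $\ker \rho_p = 1$, so $S = S^\circ = T$. If $p = 2$ and $G$ has type $B_n$ ($n \ge 2$), then $\ker \rho_2 = (\Z/2)^n$ consists of sign changes and lies entirely in $W$, yielding item (1). Otherwise, for $p = 2$, the lemma gives $\ker \rho_2 = \langle -1 \rangle$, and its intersection with $W$ is nontrivial exactly when $-1 \in W$; classically this happens precisely for the types listed in item (2), and the generator of $S/S^\circ$ then acts on $S^\circ = T$ as $-1$, i.e., by inversion. For the remaining types in characteristic $2$ ($A_n$ with $n \ge 2$, $D_n$ with $n$ odd, and $E_6$), $-1 \notin W$ and $S = S^\circ$, completing the case analysis.

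The main obstacle is verifying $\Lie(G_t) = \Lie(T)$ in bad characteristic, where $d\pi$ can have nontrivial (central) kernel and dimension counts on $G$ alone are insufficient; the uniform root space argument above circumvents this. Everything else is a matter of assembling Lemma \ref{weyl} with the observation that only $W$, not all of $\langle W, -1 \rangle$, acts on $\Lie(\Ad(T))$, so one must intersect its kernel with $W$ before producing $S/S^\circ$.
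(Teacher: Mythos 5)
Your proposal is correct and follows essentially the same route as the paper: reduce to a generic $t\in\Lie(\Ad(T))$, identify $S$ with the centralizer of $t$ in $N_G(T)$, observe that the component group is the kernel of $W$ acting on $Q\otimes k$, and invoke Lemma \ref{weyl} (intersecting $\ker\rho_2$ with $W$, i.e., testing whether $-1\in W$). The only difference is presentational: where the paper cites \cite{SGA3.2} and \cite{St:tor} for the dominance of $G\times\Lie(T)\to\Lie(G)$ and for $S^\circ=T$, you verify $\Lie(G_t)=\Lie(T)$ directly from the root-space decomposition, which is a legitimate self-contained substitute.
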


\begin{proof} 
Suppose first that $G = \Ad(G)$ and fix a maximal torus $T$ of $G$.   As $G$ is adjoint, the Lie algebra $\Lie(T)$ is a Cartan subalgebra of $\Lie(G)$, and the natural map $G \times \Lie(T) \to \Lie(G)$ is dominant \cite[XIII.5.1, XIV.3.18]{SGA3.2}.  Therefore, it suffices to verify that the stabilizer $S$ in $G$ of a generic vector $t$ in $\Lie(T)$ is as claimed.  The subgroup of $G$ transporting $t$ in $\Lie(T)$ is the normalizer $N_G(T)$ \cite[XIII.6.1(d)(viii)]{SGA3.2}, hence $S$ is the centralizer of $t$ in $N_G(T)$ and it follows that $S^\circ = T$ and $S/S^\circ$ is isomorphic to the group of elements $w$ of the Weyl group fixing $t$, compare \cite[Lemma 3.7]{St:tor}.  As $G$ is adjoint, the element $t$ is determined by its action on $\Lie(G)$, i.e., by the values of the roots on $t$; in particular $w(t) = t$ if and only if $w$ acts trivially on $Q \ot k$.  Lemma \ref{weyl} completes the proof for $G$ adjoint.

In case $G$ is not adjoint, the representation factors through the central isogeny $G \to \Ad(G)$, and $G_t$ is the inverse image of the generic stabilizer in $\Ad(G)$.
\end{proof}

To summarize the proof, the identity component of $C_G(t)$ is $T$ by \cite{SGA3.2}, so $C_G(t)$ is contained in $N_G(T)$ and is determined by its image in the Weyl group $N_G(T)/T$; this statement is included in \cite{St:tor}.  What is added here is the calculation of the component group $C_G(t)/T$, and in particular that it need not be connected.

 One can also compute the generic stabilizer for the action of $G$ on the projective space $\P(\Lie(G))$ of $\Lie(G)$ by the same argument.
If $p = 2$, since $\PGL_n(\F_2) = \GL_n(\F_2)$ we see that the generic stabilizers for $\P(\Lie(G))$ and $\Lie(G)$ are the same.
If $p$ is odd, an easy argument shows that a generic stabilizer is a maximal torus if $-1$ is not in the Weyl group and is just
a maximal torus extended by $-1$ if $-1$ is in the Weyl group. (Clearly $-1$  does act by $-1$ on $\Lie(T)$, $T$ a maximal torus.) 
In any case, the connected component of the stabilizer of a generic line in $\Lie(G)$ is contained in the normalizer of a maximal torus, as we know from \cite{SGA3.2}.

\subsection*{Action of $G$ on $\Lie(G) \oplus \Lie(G)$}

In case $k = \C$, it is well known that an adjoint simple group $G$ acts generically freely on $\Lie(G) \oplus \Lie(G)$. 
However we have also the following:

\begin{eg} \label{A1A1}
Maintaining the notation of Example \ref{A1.eg}, the Lie algebra $\pgl_2$ of $\PGL_2$ may be identified with the Lie algebra $\gl_2$ of 2-by-2 matrices, modulo the scalar matrices.  Write $T$ for the (image of the) diagonal matrices in $\PGL_2$.  A generic element $v \in \pgl_2$ is the image of some $\stbtmat{x}{y}{z}{w}$.  The normalizer of $[v] \in \P(\pgl_2)$ in $N_G(T)$ is $\Z/2$, with nontrivial element the image $g$ of $\stbtmat{0}{y}{-z}{0}$, which satisfies $gv = -v$.  If $\car k = 2$, the same calculation shows that the normalizer of $v \in \pgl_2$ is $\Z/2$.

The subgroup of $\PGL_2$ mapping a generic element of $\Lie(T)$ into $\Lie(T)$ is $N_G(T)$, as was already used in the proof of Proposition \ref{adjoint}.  Therefore, the stabilizer in $G$ of a generic element of $\P(\pgl_2) \oplus \P(\pgl_2)$ equals the stabilizer in $N_G(T)$ of a generic element of $\P(\pgl_2)$, i.e., $\Z/2$.

Moreover, if $\car k = 2$, \emph{the stabilizer in $\PGL_2$ of a generic element in $\pgl_2 \oplus \pgl_2$ is $\Z/2$.}
\end{eg}

We note that this is the only such example.

\begin{prop} \label{double}
Let $G$ be an adjoint simple group.  Then $G$ acts generically freely on $\P(\Lie(G)) \times\P(\Lie(G))$ unless $G$ has type $A_1$.  If $G$ has type $A_1$ and $\car k \ne 2$, then $G$ acts generically freely on $\Lie(G) \oplus \Lie(G)$.
\end{prop}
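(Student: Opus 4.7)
The plan is to reduce the first assertion to a generic-freeness statement for $N_G(T)$ acting on a projective space and then invoke Lemma \ref{proj}, adapting the argument of Proposition \ref{short} from the short-root representation to the adjoint representation. Fix a maximal torus $T \subset G$. For a generic $x \in \Lie(G)$, the element $x$ is regular semisimple, so after $G$-conjugation I may assume $x \in \Lie(T)$ with $C_G(x)^\circ = T$; then any $g$ with $gx = cx$ for some $c \in k^\times$ must preserve $C_G(x)^\circ$ and hence lie in $N_G(T)$. Consequently $G_{[x],[y]} \subseteq N_G(T)_{[y]}$ for every $y$, so when $\rk G \ge 2$ it suffices to show that $N_G(T)$ acts generically freely on $\P(\Lie(G))$.

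Decomposing $\Lie(G) = \Lie(T) \oplus V_0$ with $V_0 := \bigoplus_{\alpha \in \Phi} \Lie(G)_\alpha$, any element stabilizing $[y] \in \P(\Lie(G))$ also stabilizes the image of $y$ in $\P(V_0)$, so it is enough to verify generic freeness on $\P(V_0)$. I would apply Lemma \ref{proj} with $\Omega = \Phi$: multiplicities are $1$, and the weight differences span $T^*$ because for each simple root $\alpha$ there is an adjacent simple root $\beta$ with $\alpha + \beta \in \Phi$ (available once $\rk G \ge 2$), giving $\alpha = (\alpha + \beta) - \beta$. For the component-group condition, given $1 \ne w \in W$, if some root $\alpha$ has $w\alpha \notin \{\pm\alpha\}$ then the pair $\{\alpha, -\alpha\}$ yields an element of $\ker\psi$ that is not $w$-invariant; otherwise $w$ commutes with every reflection $s_\alpha$, so $w \in Z(W) = \qform{-1}$, forcing $w = -1$, and then an $A_2$-subsystem triple $\chi_1 + \chi_2 + \chi_3 = 0$ (available since $\rk G \ge 2$) provides a moved element of $\ker\psi$.

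For the $A_1$ case in characteristic $\ne 2$, Proposition \ref{adjoint} gives $G_x = T$ (a maximal torus) for generic $x \in \Lie(G)$, so for generic $(x, y) \in \Lie(G) \oplus \Lie(G)$ one has $G_{(x,y)} = T \cap G_y = T_y$, which is trivial since $T$ is a torus acting faithfully on $\Lie(G)$ (as $\bigcap_\alpha \ker\alpha = Z(G) = 1$) and therefore generically freely by Example \ref{torus}. The main technical step in the first part will be verifying condition (ii) of Lemma \ref{proj} for those Weyl elements $w$ lying in the kernel of $W \to \GL(Q \otimes k)$, which by Lemma \ref{weyl} can be as large as $(\Z/2)^n$ in characteristic $2$ for type $B_n$; the dichotomy above still applies since such $w$ act nontrivially on $\Phi$ (even when trivially on $Q \otimes k$), so some root $\alpha$ still satisfies $w\alpha \ne \pm\alpha$.
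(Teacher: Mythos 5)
Your argument is correct, but it takes a genuinely different route at the key step. The paper also begins by putting the stabilizer of a generic pair of lines inside $N_G(T_x)\cap N_G(T_y)$ for two ``generic'' maximal tori, but then finishes by quoting the proof of \cite[Cor.~10]{BGS} that the intersection of two generic conjugates of $N_G(T)$ is trivial for rank $\ge 2$. You instead keep only one of the two tori ($G_{[x],[y]}\subseteq N_G(T_x)_{[y]}$) and prove outright that $N_G(T)$ acts generically freely on $\P(\Lie(G))$, by projecting to $V_0=\oplus_{\alpha\in\Phi}\Lie(G)_\alpha$ and running Lemma \ref{proj} on the full root system --- exactly the strategy of Propositions \ref{short} and \ref{minu}, but applied to a representation the paper never treats this way. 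Your verification of the hypotheses is sound: the differences of roots contain every simple root $\alpha=(\alpha+\beta)-\beta$, so $T$ acts faithfully on $\P(V_0)$ (note that the cruder observation $\alpha-(-\alpha)=2\alpha$ would only give $2Q$ and would leave a $\mu_2^{\rk G}$ kernel, so your adjacency argument is genuinely needed); and the dichotomy ``either $w\chi\ne\pm\chi$ for some root $\chi$, or $w=-1$'' together with a zero-sum triple of roots settles condition (2) of Lemma \ref{proj}. What your route buys is self-containedness (no appeal to \cite{BGS}) and the stronger intermediate statement that $N_G(T)$ itself acts generically freely on $\P(\Lie(G))$ in rank $\ge 2$; what it costs is the page of weight combinatorics that the citation replaces. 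The $A_1$ case is handled correctly and is essentially equivalent to the paper's use of Example \ref{A1A1}.

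Two small repairs. First, for type $B_2=C_2$ there is no $A_2$ subsystem, so call your zero-sum triple what it is: $\{\alpha,\beta,-(\alpha+\beta)\}$ for adjacent simple roots, which exists in every irreducible system of rank $\ge 2$ whether or not the three roots have equal length (alternatively, invoke Example \ref{proj.m1}, since the number of positive roots exceeds the rank). Second, your closing worry about $\ker\rho_p$ and Lemma \ref{weyl} is a red herring: condition \ref{proj}\eqref{proj.ker} concerns the integral permutation action of $W$ on $\ker\psi\subseteq\oplus_{\omega\in\Phi}\Z$ and is insensitive to $\car k$, so no separate treatment of elements acting trivially on $Q\ot\F_p$ is needed --- though, as you note, such elements do move roots, so the first branch of your dichotomy covers them in any case.
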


\begin{proof}
Pick a maximal torus $T$ in $G$.  
The stabilizer in $G$ of a generic element of $\P(\Lie(G)) \times \P(\Lie(G))$ is
contained in the intersection of two generic conjugates of $N_G(T)$.  If $G$ is not of type $A_1$, then this intersection is 1 as in the proof of \cite[Cor.~10]{BGS}.  If $G$ is of type $A_1$ and $\car k \ne 2$, then we apply the preceding example.
\end{proof}

Note that if $p \ne 2$ and we consider the action of $G$ on $\Lie(G)$, then a generic stabilizer is a maximal torus and it is elementary to see
that two generic conjugates of a maximal torus intersect trivially. 

\section{Groups of type $B$ and $D$} \label{spin.sec}

We have not yet discussed upper bounds for the simply connected groups $\Spin_n$ for $n \ge 7$ of type $B_\ell$ for $\ell \ge 3$ or $D_\ell$ for $\ell \ge 4$.  Also, for $\Spin_n$ with $n$ divisible by 4 and at least 12, there is a quotient $\Spin_n / \mu_2$ that is distinct from $\SO_n$; it is denoted $\HSpin_n$ and is known as a half-spin group.

The group $G = \Spin_n$ with $n > 14$ or $\HSpin_n$ with $n > 16$ act generically freely on a (half) spin representation or the sum of a half spin representation and the vector representation $\Spin_n \to \SO_n$ by \cite{AndreevPopov} and \cite{APopov} if $\car k = 0$ and \cite{GG:spin} for all characteristics.  This gives an upper bound on $\ed(G)$, which is an equality if $\car k \ne 2$, see \cite{BRV} and \cite{GG:spin}.

We now give bounds for $\HSpin_{12}$ and $\HSpin_{16}$.

\begin{lem}
For $T$ a maximal torus in $G := \HSpin_n$ for $n$ divisible by $4$ and $n \ge 12$, the group $N_G(T)$ acts generically freely on the half-spin representation of $G$.
\end{lem}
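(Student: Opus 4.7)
The plan is to apply Lemma \ref{proj} to the half-spin representation $V$ of $G = \HSpin_n$, writing $n = 2\ell$ with $\ell \ge 6$ even. In the standard basis of \cite{Bou:g4}, the weight set $\Omega$ consists of the $2^{\ell-1}$ vectors $\tfrac12(\pm \e_1 \pm \cdots \pm \e_\ell)$ with an even number of minus signs, each occurring with multiplicity one, so hypothesis \ref{proj}\eqref{proj.mult} is immediate. The character lattice $T^*$ equals $\Z[\Phi(D_\ell)] + \Z \omega$ where $\omega := \tfrac12(\e_1 + \cdots + \e_\ell) \in \Omega$; since differences of pairs of weights in $\Omega$ produce all vectors $\e_i + \e_j$, and these span $\Z[\Phi(D_\ell)]$, the $\Z$-span of $\Omega$ is all of $T^*$, so $T$ acts faithfully on $V$.

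The remaining task is to verify hypothesis \ref{proj}\eqref{proj.ker}, that the Weyl group $W := W(D_\ell)$ acts faithfully on $\ker \psi$. My plan is to pin down the normal-subgroup structure of $W$ well enough to reduce this to a statement about a single element. Write $W = N_0 \rtimes S_\ell$ with $N_0 \cong (\Z/2)^{\ell-1}$ the group of even sign changes of the $\e_i$; since $\ell$ is even, the central element $-I = \e_1 + \cdots + \e_\ell$ lies in $N_0$. Standard $\F_2 S_\ell$-module theory shows that for $\ell \ge 5$ the only $S_\ell$-invariant subgroups of $N_0$ are $1$, $\qform{-I}$, and $N_0$. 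A short calculation using a $3$-cycle in $A_\ell$ together with any nonzero element of $N_0$ shows that no complement to $N_0$ inside a subgroup of the form $H \cdot N_0$ (with $1 \ne H \le S_\ell$) is normalized by $N_0$; hence no normal subgroup of $W$ meets $N_0$ trivially. Combining, $\qform{-I}$ is the unique minimal nontrivial normal subgroup of $W$ for $\ell \ge 6$ even, so it suffices to check that $-I$ acts nontrivially on $\ker\psi$. For this I would invoke Example \ref{proj.m1}: writing $\Omega = P \sqcup (-P)$, we have $|P| = 2^{\ell-2} \ge 16 > \ell = \dim T$ for all $\ell \ge 6$, so the example produces an element of $\ker\psi$ moved by $-I$.

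The main obstacle is the normal-subgroup analysis in the second paragraph, reducing faithfulness of the $W$-action on $\ker\psi$ to the action of the single element $-I$; everything else (multiplicity-one weights, faithful torus action, and the counting argument of Example \ref{proj.m1}) is routine. Once the reduction to $-I$ is in hand, Lemma \ref{proj} delivers generic freeness of the $N_G(T)$-action on $V$ without further work.
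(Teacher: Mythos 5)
Your proposal is correct and follows essentially the same route as the paper: apply Lemma \ref{proj}, note the representation is minuscule and $T$ acts faithfully, use Example \ref{proj.m1} with $\tfrac12\dim V = 2^{n/2-2} > n/2 = \dim T$ to see that $-1$ moves an element of $\ker\psi$, and conclude because $-1$ lies in every nontrivial normal subgroup of $W(D_{n/2})$ for $n/2$ even. The only difference is that you spell out the normal-subgroup analysis of the Weyl group that the paper asserts in one sentence.
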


\begin{proof}
Apply Lemma \ref{proj}.
The representation $V$ is minuscule and $T$ acts faithfully because $G$ does so.  The element $-1$ of the Weyl group acts nontrivially on $\ker \psi$ by Example \ref{proj.m1} because $\frac12 \dim V = 2^{n/2-2} > n/2 = \dim T$.  As $-1$ is contained in every nontrivial normal subgroup of the Weyl group, the proof is complete.
\end{proof}

\begin{cor}
Over every algebraically closed field,
\[
\ed(\HSpin_{12}) \le 26 \quad \text{and} \quad \ed(\HSpin_{16}) \le 120.
\]
\end{cor}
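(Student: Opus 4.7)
The plan is to combine the preceding lemma with the standard reduction $\ed(G) \le \ed(N_G(T))$ used throughout the paper. First I would record the relevant numerical data: the half-spin representation $V$ of $G = \HSpin_n$ has dimension $2^{n/2-1}$, so for $n = 12$ and $n = 16$ one gets $\dim V = 32$ and $\dim V = 128$ respectively, while $\dim N_G(T) = \rk G = n/2$ equals $6$ and $8$.

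Next I would apply the preceding lemma: $N_G(T)$ acts generically freely on $V$. Taking $X = V$ in Lemma~\ref{ed.proj} (with $N_G(T)$ in place of $G$) then yields $\ed(N_G(T)) \le \dim V - \dim N_G(T) = \dim V - n/2$.

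Finally I would invoke the fact recalled at the beginning of Section~\ref{A.sec} that $H^1(K, N_G(T)) \to H^1(K, G)$ is surjective for every extension $K \supseteq k$, hence $\ed(\HSpin_n) \le \ed(N_G(T))$. Plugging in the dimensions gives $\ed(\HSpin_{12}) \le 32 - 6 = 26$ and $\ed(\HSpin_{16}) \le 128 - 8 = 120$, which is the claim.

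There is essentially no obstacle: all three ingredients are already in place, and the corollary reduces to an arithmetic verification. The only content beyond bookkeeping is remembering that the half-spin representation of $\Spin_n$ factors through the quotient $\HSpin_n$ and retains dimension $2^{n/2-1}$, so that the preceding lemma directly supplies the generically free action needed to invoke Lemma~\ref{ed.proj}.
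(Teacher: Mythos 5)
Your proposal is correct and is exactly the argument the paper intends (the corollary is stated without a separate proof precisely because it follows from the preceding lemma via $\ed(G)\le\ed(N_G(T))\le\dim V-\dim T$). The dimension counts $2^{5}-6=26$ and $2^{7}-8=120$ are right.
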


The remaining groups are $\Spin_n$ with $7 \le n \le 14$.  In case $\car k \ne 2$, the precise essential dimension is known by Rost, see \cite{Rost:14.1}, \cite{Rost:14.2}, and \cite{G:lens}.  The same methods, combined with the calculations of the generic stabilizers from \cite{GG:spin}, will provide  upper bounds for $\ed(\Spin_n)$ in case $\car k = 2$.  But these methods require detailed arguments, so for our purposes we note simply that $\Spin_n$ acts faithfully on the spin representation for $n$ odd and on the direct sum of the vector representation and a half-spin representation for $n$ even; Lemma \ref{BRV.lem} then provides an upper bound on $\ed(\Spin_n)$.
This completes the task of giving an upper bound on $\ed(G)$ for every simple algebraic group $G$ over an algebraically closed field $k$.

{\small\subsection*{Acknowledgements} We thank the referee, Zinovy Reichstein, and Mark MacDonald  for their helpful comments, which greatly improved the paper.}

\bibliographystyle{amsalpha}
\bibliography{skip_master}

\end{document}